\newtheorem{theorem}{Theorem}[section]
\newtheorem{thm}{Theorem}
\newtheorem{proposition}{Proposition}[section]
\newtheorem{coro}{Corollary}[section]
\newtheorem{lemma}{Lemma}[section]
\newtheorem{definition}{Definition}[section]
\newtheorem{remark}{Remark}[section]
\newcommand\R{\mathbb{R}}
\newcommand\M{\mathcal{M}}
\renewcommand\o{\omega}
\renewcommand\S{\sigma}
\renewcommand{\r}{\right}
\renewcommand{\l}{\left}
\newcommand\E{\mathbb{E}}
\newcommand{\e}{\epsilon}
\begin{document}

\title{Vertical Maximal functions on manifolds with ends}
\author{Himani Sharma,  Adam Sikora} 
\address{Himani Sharma, Department of Mathematics, Macquarie University, NSW 2109, Australia \newline
\indent{\it Current address:} Institute for Analysis, Karlsruhe Institute of Technology, Karlsruhe, Germany}
\email{himani.sharma@kit.edu}
\address{
	Adam Sikora, Department of Mathematics, Macquarie University, NSW 2109, Australia}
\email{adam.sikora@mq.edu.au}

\keywords{ Laplace-Beltrami operator, vertical maximal functions, horizontal maximal functions, Fefferman-Stein vector-valued maximal functions, Riesz transform, vertical square functions, R-boundedness, manifolds with ends}

\begin{abstract}
We consider the setting of manifolds with ends which are obtained by compact perturbation (gluing)
of ends of the form $\R^{n_i}\times \mathcal{M}_i$. 
We investigate the family of vertical resolvent $\{\sqrt{t}\nabla(1+t\Delta)^{-m}\}_{t>0}$, where $m\geq1$. We show that 
the family is uniformly continuous on all $L^p$ for $1\le~p~\le~\min_{i}n_i$.
Interestingly, this is a closed-end condition in the considered setting. We prove that the corresponding maximal function is bounded in the same range except that it is only weak-type $(1,1)$ for $p=1$. The 
Fefferman-Stein vector-valued maximal function is again of weak-type $(1,1)$ but bounded if and only if $1<p<\min_{i}n_i$, and not at $p=\min_{i}n_i$. 
\end{abstract}

\maketitle

\section{Introduction}
Maximal functions play a significant role in the history and development of Harmonic Analysis,
see \cite[Chapters II and V]{St}. This research area is closely related to the notion of square functions, Riesz transform, and other singular integrals. Here our starting point is the theory of vertical maximal functions and we study the same setting for which the square functions were investigated in \cite{BaSi}. This is a situation which involves manifolds with ends and includes examples of spaces of non-doubling class. The description of Riesz transform continuity in this setting was provided in \cite{CCH} and \cite{HS2}, see also \cite{Ca2}.

This point motivates and initiates our study here in understanding the behaviour of certain maximal functions on $L^p$ spaces.

There are two main themes for which we believe readers could find our manuscript interesting:
\begin{enumerate}
	\item[$\bullet$] Maximal functions and Fefferman-Stein vector-valued maximal functions.
	\item[$\bullet$] Uniform continuity of gradient of heat kernels or resolvents and its relation with Riesz transform. 
\end{enumerate}
Let us recall
$L^{p_0}$-boundedness condition of the gradient of the heat semigroup generated by the Laplace-Beltrami operator $-\Delta$
$$\|\;|\sqrt t\nabla \exp(-t\Delta)|\;\|_{p_0\to p_0}\le C,\;\;\; \text{ see $(G_p)$-condition in \cite{ACDH}.} \leqno (G_{p_0}) $$
%see for example \cite{ACDH}.
%Note that  $(G_2)$ always holds.

We do not study vertical maximal functions and related ideas in their full generality but rather in illuminating 
setting of {\it manifolds with ends} which provides several surprising and inspiring examples illustrating the above two points.  It became clear from the beginning of our study that we should investigate these points in the 
context of two other central topics of harmonic analysis:  
\begin{enumerate}
	\item[$\bullet$] Square functions: We are interested in the following vertical square functions
\begin{equation}\label{VSF}
\Bigg(\int_0^\infty|\sqrt{t}\nabla\psi(t\Delta) x|^2\frac{dt}{t}\Bigg)^{1/2},
\end{equation}
where $\psi(t\Delta)$ can either be the semigroup generated by $-\Delta$ for $\psi(z)=e^{-z}$ or a resolvent operator, that is, $\psi(z):=(1+z)^{-m}$ for some $m\geq1$.
	\item[$\bullet$]  R-boundedness: We refer to Section \ref{R} for the description.
\end{enumerate}

Our investigation sheds light on the significance of results described in \cite{ABS2}
as well as suggests  several interesting future directions in {\it Analysis in Banach spaces.}

We consider here the dichotomy of 
\begin{enumerate}
	\item[$\bullet$] Horizontal maximal functions, see \eqref{HMF}.
	\item[$\bullet$]  Vertical maximal functions, see \eqref{VMF}.
\end{enumerate}
The horizontal variant involves only a generator of the semigroup and resolvent. 
The more unexpected behaviour is related to vertical type estimates which involve gradient action, which we discuss in detail below. The vertical estimates are the most interesting part of our discussion.

The classical semigroup theory is based on the equivalence between the resolvent and semigroup approach, see Pazy \cite{Pazy}. We believe that in the setting of manifolds with ends the resolvent approach is more natural and effective.
It is likely to be a better approach in studying compact perturbation of the standard Laplace operator and many similar settings. 
Thus, building harmonic analysis theory based on resolvent rather than on heat kernel is one of the directions which we would like to investigate here. A more comprehensive approach to the resolvent based harmonic analysis requires more extensive work going beyond our discussion here, and which we are going to study 
 somewhere else.

Our study here is devoted to the setting of manifolds with ends. In general, these spaces usually do not satisfy the doubling condition, and this is a very significant aspect of our investigation. In a metric measure space $(X,d,\mu)$, the {\it doubling condition} means that there exists a constant $C>0$ such that
\begin{equation}\label{Doubling}
    \mu(B(x,2t))\leq C\mu(B(x,t)) \tag{D}
\end{equation}
for all $x\in X$ and $t>0$, where the notation $B(x,t)$ is used to denote the ball of radius $t$ centered at the point $x$.

We start with the classical definition of maximal functions introduced by Hardy and Littlewood in 1930. The following Hardy-Littlewood maximal operator given in terms of the averages over balls $B(x,t)$ has been an essential part of harmonic analysis:
\begin{equation*}
    Mf(x):=\sup_{t>0}\frac{1}{\mu(B(x,t))}\int_{B(x,t)}|f(y)|dy.
\end{equation*}
The classical result shows the boundedness of $M$ on $L^p(\R^d)$ spaces for $1<p\leq\infty$ along with weak-type (1,1) bounds. %Mathematicians have studied the boundedness of this operator in different forms including their weighted versions on different spaces {\color{red} (Give references)}.  
These maximal functions are studied in various forms till date and Stein's book \cite{St} is a great source to know some classical results on them. 

The following Stein's maximal function 
\begin{equation*}
    M^{exp}f(x):=\sup_{t>0}|e^{-t\Delta}f(x)|
\end{equation*}
defined using the semigroup generated by the Laplace-Beltrami operator $-\Delta$ is of significant importance in the characterisation of Hardy spaces, see for instance \cite{BGS}. On Euclidean spaces, this maximal function is dominated by the Hardy-Littlewood maximal operator and hence is bounded on $L^p$ spaces with weak-type (1,1) estimates on $L^1$. In \cite[Page 73]{St_topics}, Stein showed the boundedness of $M^{exp}$ on $L^p$ spaces for $1<p\leq\infty$ implicitly, where he proved a more general result for the semigroup generated by a self-adjoint operator that satisfies contraction property $\|e^{-t\Delta}f\|_{L^p}\le \|f\|_{L^p} $. Similar result was obtained by Le Merdy and Xu in \cite[Corollary 4.2]{MeXu} for the derivative of the heat semigroup. The weak-type estimates were used to be obtained with the help of the standard Calder{\'o}n-Zygmund theory of 70's and 80's, specifically on the Euclidean spaces or more generally on doubling metric measure spaces. This theory no longer works in non-doubling situations.

In certain non-doubling situations, more precisely on manifolds with ends, Duong-Li-Sikora \cite{DLS} showed the weak-type (1,1) estimates for both Hardy-Littlewood maximal operator and Stein's maximal operator. %and $L^p$ boundedness of both $M$
%and $M^{exp}$ for $1<p\leq \infty$. 
These non-doubling complete Riemannian manifolds are formed as connected sums and are of great interest to us in this article. 

We start with a definition of a manifold with finitely many ends. We refer readers 
to \cite{GIS-C, GS1, GS2} for a more detailed description of the setting. Here we use notations similar to \cite{BaSi}.

\begin{definition}\label{ConnectedSum}
We say that a manifold $\mathcal{V}$ is a connected sum of a finite number of complete and connected manifolds $\mathcal{V}_1,\cdots,\mathcal{V}_l$ of the same dimension, denoted by
\begin{equation*}
\mathcal{V}=\mathcal{V}_1\#\mathcal{V}_2\#\cdots\#\mathcal{V}_l
\end{equation*}
if there exists some compact subset $K\subset \mathcal{V}$, with non-empty interior, for which $\mathcal{V}\setminus K$ can be expressed as the disjoint union of open subsets $E_i\subset \mathcal{V}$ for $i=1,\cdots,l$, where $E_i\simeq \mathcal{V}_i\setminus K_i$ for some compact subset $K_i\subset \mathcal{V}_i$.
\end{definition}

%\subsection{Manifolds with Ends}\label{SecManifolds}
Fix $N\in\mathbb{N}^*=\mathbb{N}\setminus\{0\}$. We consider an $N$-dimensional complete Riemannian manifold $ \mathcal{M}$ (unless otherwise stated) that is obtained  by taking the connected sum of $l \geq 2$
copies of manifolds which are Cartesian products of Euclidean spaces $\R^{n_i}$ with compact  Riemannian manifolds $\mathcal{M}_i$ and $n_i+\text{dim }\M_i=N$. That is, we are interested in smooth Riemannian manifolds of the form 
\begin{equation}\label{DefManifold}
    \M:=(\R^{n_1}\times\M_1)\#\cdots\#(\R^{n_l}\times\M_l).
\end{equation}
As mentioned in the previous definition, it is possible to choose compact subset $K\subset\M$ with non-empty interior, and open subsets $E_i\subset\M$ such that $\M\setminus K$ can be expressed as a disjoint union of $E_i$. This makes $\M$ a manifold with ends where $E_i$ are referred to as the ends and $K$ is considered to be the center of $\M$. The Riemannian structure on $\M$ restricted to each end coincides with the Cartesian product of Euclidean spaces $\R^{n_i}$ and some metric on $\M_i$. For more geometrical interpretation of these manifolds, see  \cite{GIS-C, GS1, GS2}.

Note that, an interesting case occurs where the dimensions $n_i$ of the Euclidean spaces are not all the same. That is, the ends have different `asymptotic dimension'. In that situation the manifold is not a doubling space. We can think of this intuitively as a ``connected sum of Euclidean spaces of different dimensions". Such a class of manifolds was studied in detail by Grigor'yan and Saloff-Coste, who obtained upper and lower bounds on the heat kernel on such manifolds, see \cite{GS1, GS2}. Duong-Li-Sikora used this to obtain weak-type (1,1) estimates for the Hardy-Littlewood maximal function and Stein's maximal function. At this point, no bounds similar to \cite{GS1,GS2} are known for the gradient as well as the derivative of the heat kernel in the setting of manifolds with ends. 

Therefore, in this article, we investigate the boundedness of the following resolvent based horizontal and vertical maximal functions, respectively, in the class of manifolds with ends. Here $\Delta$ is the negative Laplace-Beltrami operator on $\M$ and $\nabla$ is the gradient corresponding to the Riemannian structure
\begin{equation}\label{HMF}
M_m^{res,\Delta}f(x)=\sup_{t>0}|t\Delta (1+t\Delta)^{-m}f(x)|,\;\;\; m\in\mathbb{N},
\end{equation}
\begin{equation}\label{VMF}
    M_m^{res,\nabla}f(x)=\sup_{t>0}|\sqrt{t}\nabla (1+t\Delta)^{-m}f(x)|,\;\;\;m\in\mathbb{N}.
\end{equation}
For \eqref{HMF} and \eqref{VMF} we are able to use pointwise estimates from \cite{HS2}, see also Propositions \ref{PropH3} and \ref{PropH4} below. 
Our main aim is to prove the following theorems in the setting of Riemannian manifolds with ends. Thus, we assume $\M$ to be a manifold with ends defined in \eqref{DefManifold} with $n_i\geq3$ and $n^* =\min n_i$. We start our discussion here with a version of $(G_p)$-condition which should be connected with the results from \cite{ACDH}.

\begin{thm}\label{GpThm}
	Let $m\in\mathbb{N}$. The following estimates on the gradient of the resolvent operator holds:
	\begin{equation} \label{sc1}\|\;|{\sqrt t} \nabla (1+t\Delta)^{-m}|\;\|_{n^*\to n^*}\le {C} \quad \forall{t>0},
	\end{equation}
	\begin{equation} \label{sc2}\|\;|{\sqrt t} \nabla (1+t\Delta)^{-m}|\;\|_{1\to 1}\le {C} \quad \forall{t>0},
	\end{equation}
	and
	\begin{equation} \label{szesc}
	\|\;|{\sqrt t} \nabla (1+t\Delta)^{-m}|\;\|_{p \to p} \le  {C}  (\sqrt t)^{1-n^*/p}   \quad \forall{t\geq1}
	\end{equation}
for all $  p\in [n^*, \infty]$. 
Moreover for some $c>0$
\begin{equation} \label{szescNeg}
	\|\;|{\sqrt t} \nabla (1+t\Delta)^{-1}|\;\|_{p \to p} \ge  {c}  (\sqrt t)^{1-n^*/p}   \quad \forall{t\geq1}
	\end{equation}
for all $  p\in [n^*, \infty]$. 
	\end{thm}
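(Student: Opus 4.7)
The plan is to derive all four assertions from the pointwise kernel estimates for $\sqrt{t}\,\nabla(1+t\Delta)^{-m}$ provided by Propositions~\ref{PropH3} and \ref{PropH4}, combined with $L^p$-interpolation and an explicit test function for the lower bound. Write $K_t^{(m)}(x,y)$ for the integral kernel of $\sqrt{t}\,\nabla(1+t\Delta)^{-m}$.

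For \eqref{sc2} I would apply a Schur-type test, verifying $\sup_{y\in\M}\int_{\M}|K_t^{(m)}(x,y)|\,d\mu(x)\le C$ uniformly in $t>0$. The integral splits into a near-diagonal piece $\{d(x,y)\le\sqrt{t}\}$ on which the kernel obeys a Gaussian-type bound integrating to $O(1)$, and a tail piece whose decay is that of the derivative of the Green function on the appropriate end; the hypothesis $n_i\ge 3$ ensures integrability of the tails. For \eqref{sc1} at the critical index $p=n^*$, the naive factorisation $\sqrt{t}\,\nabla(1+t\Delta)^{-m}=(\nabla\Delta^{-1/2})\,\sqrt{t\Delta}(1+t\Delta)^{-m}$ is unavailable because the Riesz transform is only of weak type at $n^*$ (\cite{CCH,HS2}). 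Instead I would interpolate \eqref{sc2} with an $L^{p_0}\to L^{p_0}$ bound for some $p_0\in(n^*,\infty)$ taken directly from the kernel estimates, treating the near-diagonal and tail regions separately and exploiting the extra smoothing provided by $m\ge 1$.

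The growth bound \eqref{szesc} then follows by Riesz--Thorin interpolation between \eqref{sc1} and the endpoint $\|\sqrt{t}\,\nabla(1+t\Delta)^{-m}\|_{\infty\to\infty}\le C\sqrt{t}$ for $t\ge 1$. To prove this endpoint I would integrate $|K_t^{(m)}(x,y)|$ in $y$: the near-diagonal contribution is $O(1)$ uniformly in $t$, while the tail contribution on an end of dimension strictly larger than $n^*$ gains an extra factor $\sqrt{t}$ because the volume of such an end at scale $\sqrt{t}$ grows strictly faster than that of an $n^*$-dimensional end. Matching exponents yields precisely $(\sqrt{t})^{1-n^*/p}$ for every $p\in[n^*,\infty]$.

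For the matching lower bound \eqref{szescNeg} I would test on a smooth cutoff $g$ of the Green profile on the end $E_{i_*}$ of smallest dimension $n^*$, supported on a dyadic shell at scale $\sqrt{t}$. A direct calculation gives $\|\sqrt{t}\,\nabla g\|_p\gtrsim (\sqrt{t})^{1-n^*/p}\|g\|_p$, while $\|(1+t\Delta)g\|_p\lesssim\|g\|_p$ because $g$ is nearly harmonic on its support. Setting $f=(1+t\Delta)g$ and rearranging produces the claimed lower bound. The principal obstacle throughout is the critical exponent $p=n^*$ in \eqref{sc1}: the Riesz transform factorisation degenerates exactly there, and the non-doubling geometry of $\M$ rules out the standard Calder\'on--Zygmund machinery, so the interpolation must be executed directly from the kernel estimates while keeping careful track of the contributions coming from ends of different dimensions.
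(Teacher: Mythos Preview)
Your proposed route to \eqref{sc1} has a genuine gap. You plan to interpolate the $L^1$ bound \eqref{sc2} against a uniform $L^{p_0}\to L^{p_0}$ bound for some $p_0>n^*$. But no such uniform bound exists: the very estimate \eqref{szescNeg} that you are also asked to prove shows that for $m=1$ the norm $\|\sqrt{t}\,\nabla(1+t\Delta)^{-1}\|_{p_0\to p_0}$ grows like $(\sqrt t)^{1-n^*/p_0}$ for every $p_0>n^*$. Interpolating a uniform $L^1$ bound against a bound that grows in $t$ yields, at the intermediate exponent $n^*$, a bound that still grows in $t$ (the interpolation exponent is strictly positive). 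So this step cannot close, and the phrase ``extra smoothing provided by $m\ge 1$'' does not help: the theorem includes $m=1$, and even for larger $m$ the parametrix analysis in the paper shows the same growth for $p>n^*$.

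The paper avoids this trap by never going above $n^*$. It substitutes $t=k^{-2}$, invokes the parametrix decomposition $(k^2+\Delta)^{-m}=\sum_{j=1}^4 H_j(k)$ from \cite{HS2,BaSi}, and bounds each $k^{2m-1}\nabla H_j(k)$ on $L^p$ directly for $1\le p\le n^*$. The critical behaviour sits in the $H_{1,b}$ and $H_3$ pieces, whose kernels factor as $\omega_1(x,k)\omega_2(y,k)$; computing $\|\omega_1(\cdot,k)\|_p\|\omega_2(\cdot,k)\|_{p'}$ explicitly gives a bound $\lesssim k^{n^*/p-1}$, which is $O(1)$ precisely for $p\le n^*$ and produces \eqref{szesc} for $p>n^*$. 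Thus \eqref{sc1}, \eqref{sc2} and \eqref{szesc} all fall out of the same calculation rather than from interpolation. For \eqref{szescNeg} the paper does not use a test function on a dyadic shell; it isolates the $G_{1,b}+G_3$ piece (the other pieces being uniformly bounded), reduces to a rank-one operator $f\mapsto a(x)\int(\Delta_{\mathbb R^{n_i}\times\mathcal M_i}+k^2)^{-1}(x_i^\circ,y)\phi_i(y)f(y)\,dy$, and reads off the lower bound from $\|g\|_{p'}$ where $g(y)=d(x_i^\circ,y)^{2-n_i}e^{-ckd(x_i^\circ,y)}$. Your test-function idea may be salvageable, but the step ``$\|(1+t\Delta)g\|_p\lesssim\|g\|_p$ because $g$ is nearly harmonic'' would need real work, since $t\Delta$ acting on a cutoff at scale $\sqrt t$ is not small.
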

 \begin{remark}
 Note that  \eqref{sc1} and \eqref{sc2} are similar to the estimates valid for the standard Laplace operator on Euclidean space $\R^n$. However, from \eqref{szesc} and \eqref{szescNeg}
 it follows that  for $t \ge 1$ the norm  $ \|\;|{\sqrt t} \nabla (1+t\Delta)^{-1}|\;\|_{p \to p}$ is comparable to $  (\sqrt t)^{1-n^*/p}$ 
 for $  p\in [n^*, \infty]$. For $p>n^*$ this asymptotic   essentially differs from the standard Laplace operator behaviour. 
 \end{remark}
The next statement is a maximal function version of the condition $(G_p)$. It is valid in the same range of $p$ except of weak-type $(1,1)$ for $L^1$.
\begin{thm}\label{maxThm}
Assume that $\M$ is a manifold with ends defined in \eqref{DefManifold}. 
Then on $L^p(\M)$
  \begin{enumerate}
  \item[(a)] The vertical maximal operator $M_m^{res,\nabla}$ is bounded if and only if 
	$1< p \le n^*$, that is,
	$$
	\|M_m^{res,\nabla}f(x)\|_{p \to p } \le C < \infty 
	$$
	if and only if $1<p \le  n^*$ (for some constant $C>0$). 
    \item[(b)] The horizontal  maximal operator $M^{res,\Delta}_m$
is bounded for all $1<p \leq \infty$, that is, 
		$$
	\|M_m^{res,\Delta}f(x)\|_{p \to p } \le c  < \infty 
	$$
for all  $1<p \leq \infty$ (for some constant $c>0$).
\end{enumerate}
Both operators $M_m^{res,\nabla}$ and $M_m^{res,\Delta}$ are also of weak-type $(1,1)$. 
\end{thm}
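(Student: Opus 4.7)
The plan is to treat parts (a) and (b) separately. Part (b) reduces cleanly to the already-understood Stein semigroup maximal function via subordination, whereas part (a) requires the pointwise kernel estimates of Propositions \ref{PropH3} and \ref{PropH4} together with the non-doubling Calder\'on--Zygmund analysis from \cite{DLS}.

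For (b), I would start with the algebraic identity
$$t\Delta(1+t\Delta)^{-m} \;=\; (1+t\Delta)^{-(m-1)} - (1+t\Delta)^{-m},$$
which reduces matters to bounding $\sup_{t>0}|(1+t\Delta)^{-k}f(x)|$ for $k=m-1,m$ (the case $k=0$ being trivial, as $(1+t\Delta)^0 f = f$). The Laplace transform representation
$$(1+t\Delta)^{-k} f\;=\;\frac{1}{\Gamma(k)}\int_0^\infty s^{k-1}e^{-s}e^{-st\Delta}f\,ds$$
gives the pointwise domination $\sup_{t>0}|(1+t\Delta)^{-k}f(x)| \le C_k\, M^{exp}f(x)$. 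The $L^p$ conclusion of (b) for $1<p\le\infty$ then follows from Stein's maximal theorem applied to the contractive self-adjoint semigroup $e^{-t\Delta}$, and the weak-type $(1,1)$ assertion is precisely the content of \cite{DLS} for $M^{exp}$ on $\M$.

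For (a), on the positive side I would invoke the pointwise kernel estimates for $\sqrt{t}\,\nabla(1+t\Delta)^{-m}$ given in Propositions \ref{PropH3} and \ref{PropH4}, splitting the kernel into a local piece with Gaussian decay at scale $\sqrt{t}$ and a global piece whose decay is controlled by the exponent $n^*$. The local piece admits, uniformly in $t$, a pointwise majorant of the form $C\,Mf(x)$ where $M$ is the Hardy--Littlewood maximal function, which is bounded on $L^p$ for $1<p\le\infty$ and weak-type $(1,1)$ on $\M$ by \cite{DLS}. The global piece is handled by a direct integration estimate in which the threshold $p\le n^*$ emerges from the volume growth of the ends, mirroring the way the same critical exponent arises in Theorem \ref{GpThm}. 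For the negative direction, unboundedness at $p>n^*$ is immediate from \eqref{szescNeg}: the family $\{\sqrt{t}\nabla(1+t\Delta)^{-m}\}_{t>0}$ fails to be uniformly bounded on $L^p$, and for every fixed $t$ one has $\|M^{res,\nabla}_m f\|_p\ge\|\sqrt{t}\nabla(1+t\Delta)^{-m}f\|_p$, so $M^{res,\nabla}_m$ itself cannot be bounded. Unboundedness at $p=1$ follows from the standard Euclidean counter-example transplanted into one of the ends $\R^{n_i}\times\M_i$.

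The main obstacle is the weak-type $(1,1)$ bound for $M^{res,\nabla}_m$, since $\M$ is non-doubling and the standard Calder\'on--Zygmund machinery is unavailable. I would adapt the non-doubling decomposition of \cite{DLS}: write $f=g+\sum_j b_j$ via a Whitney-type covering that respects the ends of $\M$, bound the good part $g$ by $L^2$-boundedness (which is furnished by Theorem \ref{GpThm} since $2\le n^*$, coupled with the identity $|v_t(x)|^2 = 2\int_0^t v_s(x)v_s'(x)\,ds$ applied to $v_s = \sqrt{s}\nabla(1+s\Delta)^{-m}f$ to trade the maximal function for a pair of square functions), and treat each bad atom $b_j$ using the off-diagonal H\"ormander-type estimates delivered by Propositions \ref{PropH3} and \ref{PropH4}. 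The delicate point is that these kernel tails behave differently on different ends, so the bad-part analysis must branch according to which end contains $\mathrm{supp}(b_j)$, with the end of smallest $n_i$ producing the borderline contribution.
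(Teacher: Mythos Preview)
Your part (b) is exactly the paper's argument: the identity $t\Delta(1+t\Delta)^{-m}=(1+t\Delta)^{-(m-1)}-(1+t\Delta)^{-m}$, subordination to $M^{exp}$, and \cite{DLS}. Your negative direction in (a) via \eqref{szescNeg} also matches.

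For the positive direction in (a) the paper does not work with an abstract local/global splitting of the kernel. It uses the parametrix $(\Delta+k^2)^{-m}=\sum_{j=1}^4 H_j(k)$ from Section~\ref{Prelim} and bounds $\sup_{0<k<1}|k^{2m-1}\nabla H_j(k)f|$ term by term: $H_1$ via Lemma~\ref{MaxLemma} on each end (plus a direct estimate when $\nabla$ hits the cutoff $\phi_i$), $H_2$ as a compactly supported pseudodifferential operator, and $H_3,H_4$ via Propositions~\ref{PropH3}--\ref{PropH4}. The restriction $p\le n^*$ arises only from the $H_{1,b}$ and $H_3$ pieces.

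The substantive divergence is the weak-type $(1,1)$. You propose a non-doubling Calder\'on--Zygmund decomposition adapted from \cite{DLS}, which you rightly flag as the main obstacle; note also that your square-function identity for the good part would require controlling $\partial_s\big(\sqrt{s}\nabla(1+s\Delta)^{-m}\big)$, which is not supplied by the stated kernel bounds. The paper avoids all of this. The key structural fact you are not using is that the kernels of $H_3$ and $H_4$ are \emph{multiplicatively separable}: e.g.\ $|k^{2m-1}\nabla_x H_3(k)(x,y)|\lesssim k\,\omega_1(x,k)\,\omega_2(y,k)$. Since $\sup_{0<k\le1}k\,\omega_1(x,k)\lesssim\langle d(x_i^\circ,x)\rangle^{-n_i}=:\omega(x)$ and $\|\omega_2(\cdot,k)\|_\infty$ is uniformly bounded, the $H_3$-maximal operator is pointwise dominated by $\omega(x)\,\|f\|_1$, and $\omega\in L^{1,\infty}(\M)$ because it is exactly the critical power on each end. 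This gives weak-$(1,1)$ for $H_3$ and $H_4$ in one line, with no atomic decomposition and no H\"ormander condition; interpolation with the directly-obtained $L^p$ bound for $p>n^*/(n^*-1)$ then fills in the full range $1<p\le n^*$. Your route may be completable, but it is working much harder than necessary and the ingredients you list (off-diagonal H\"ormander estimates branching by end) are not what Propositions~\ref{PropH3}--\ref{PropH4} actually provide.
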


\begin{remark}
    The negative result in the case of the vertical maximal operator comes as a consequence of the resolvent based $(G_p)$ condition proved in Theorem~\ref{GpThm}. Interestingly, $(G_p)$ condition is not open ended unlike the one with the gradient of the heat kernel given in \cite{ACDH}. Note that in the setting considered in  \cite{CJKS}
    condition $(G_p)$ is proven to be open ended. 
\end{remark}
\begin{remark}
    It is known that $L^p$ continuity  for square  function \eqref{VSF} implies $(G_p)$ type version of conditions 
    \eqref{sc1} and \eqref{sc2} for $L^p$ spaces, see \cite[Proposition 5.3]{CO20} and \cite{CuDu}. Such square function estimates for 
    all $1<p<n^*$ were obtained in \cite{BaSi}. It was also verified that the square function estimates are not valid for $n^* \le p$. It means that this approach cannot be used to
    verify  \eqref{sc1} for $p=n^*$. The results described in \cite{BaSi} cannot lead to \eqref{szesc} and \eqref{szescNeg}.
    In \cite{CO20}, the following problem is also stated: ``One might ask whether boundedness of Square function in \eqref{VSF} is in turn equivalent to $(G_p)$. To the best of our knowledge, this question is also open in general." One can see that in the setting which we consider here the equivalence is not valid for 
    $p=n^*$.
\end{remark}

The next theorem gives us the vector-valued version of Theorem~\ref{maxThm}. Similar estimate for the Hardy-Littlewood maximal function was obtained by Fefferman and Stein in \cite{F-S} on Euclidean spaces. The equation~\eqref{FSeqn} below is a variant of the Fefferman-Stein maximal inequality for the vertical maximal operator. There are many such results for the Hardy-Littlewood maximal operator on different spaces. See, for instance, \cite[Theorem~1.2]{GLY} on spaces of homogeneous type and \cite[Theorem~3.1]{DKK} again on spaces of homogeneous type but for the lattice valued maximal function. The inequality in \eqref{FSeqn} for the vertical maximal operator comes naturally on a doubling manifold as a result of the Hardy-Littlewood maximal inequality and is mentioned in Proposition~\ref{FS}. In Theorem~\ref{FSThm} below we show this result on manifolds with ends.

\begin{thm}\label{FSThm}
	Consider the vertical maximal operator
    $M^{res,\nabla}_m$ on $L^p(\M)$. There exists a constant $A_{p,\M}>0$ (depending on $p$ and $\M$) such that
	\begin{align}\label{FSeqn}
	\bigg|\bigg|\bigg(\sum_{i=1}^\infty(M_m^{res,\nabla}f_i)^2\bigg)^{1/2}\bigg|\bigg|_{L^p}
	\leq A_{p,\M}\bigg|\bigg|\bigg(\sum_{i=1}^\infty|f_i|^2\bigg)^{1/2}\bigg|\bigg|_{L^p}
	\end{align}
	for all $1<p<n^*$ and of weak-type $(1,1)$.
	The estimates are false for $n^* \le p$.
\end{thm}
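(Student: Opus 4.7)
I would split the proof into the positive range $1<p<n^{*}$ (together with the weak $(1,1)$ bound) and the negative statement $p\ge n^{*}$. For the positive direction the strategy is to dominate the vertical maximal operator pointwise by the Hardy--Littlewood maximal operator $M_{HL}$ plus a remainder that is controllable in the $\ell^{2}$-valued sense, and then to invoke a vector-valued Fefferman--Stein inequality for $M_{HL}$. Using the pointwise kernel estimates from Propositions~\ref{PropH3}--\ref{PropH4}, for each $t>0$ I would split
\[
|\sqrt{t}\,\nabla(1+t\Delta)^{-m}f(x)|\;\le\; A_t^{\mathrm{loc}}f(x)+A_t^{\mathrm{glo}}f(x),
\]
where $A_t^{\mathrm{loc}}$ is a convolution against a nonnegative, $L^{1}$-normalised, rapidly decaying approximation of identity at scale $\sqrt{t}$ (so that $\sup_{t}A_t^{\mathrm{loc}}f\lesssim M_{HL}f$ pointwise), while $A_t^{\mathrm{glo}}$ collects the anisotropic contribution from the ends of $\M$. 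The local piece is then handled by the vector-valued Fefferman--Stein inequality for $M_{HL}$ on $\M$; although $\M$ is not doubling, the adapted Calder\'on--Zygmund decomposition of Duong--Li--Sikora~\cite{DLS} supplies the scalar weak $(1,1)$ and strong $(p,p)$ bounds from which a vector-valued extension follows. The global piece $A_t^{\mathrm{glo}}$ admits an $\ell^{2}$-valued pointwise majorant whose scalar $L^{p}$ behaviour is exactly that of Theorem~\ref{GpThm}, and hence gives strong $L^{p}(\ell^{2})\!\to\! L^{p}(\ell^{2})$ bounds for every $1<p<n^{*}$.

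For the vector-valued weak $(1,1)$ I would feed $F:=(\sum_i|f_i|^{2})^{1/2}$ into the Duong--Li--Sikora stopping-time decomposition at level $\lambda$, obtaining splittings $f_i=g_i+b_i$; the good parts are controlled in $L^{2}(\ell^{2})$ via the strong $(2,2)$ estimate of Theorem~\ref{maxThm}(a), while the bad parts are absorbed using the scalar weak $(1,1)$ bound from the same theorem, summed in $\ell^{2}$ by positivity. Marcinkiewicz interpolation for vector-valued operators between this weak $(1,1)$ estimate and the strong $L^{p_{0}}(\ell^{2})$ estimate above (for any fixed $1<p_{0}<n^{*}$) then fills the whole range $1<p<n^{*}$.

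For the negative direction, $p>n^{*}$ is immediate since $M_m^{res,\nabla}$ is already unbounded on $L^{p}(\M)$ by Theorem~\ref{maxThm}(a): the sequence $\{f,0,0,\dots\}$ contradicts \eqref{FSeqn}. The delicate case is $p=n^{*}$, which I would treat by contradiction: Khintchine's inequality together with the linearisation $\{f_i\}\mapsto\{\sqrt{t_i}\,\nabla(1+t_i\Delta)^{-m}f_i\}$ shows that \eqref{FSeqn} at $p=n^{*}$ would force $R$-boundedness of the family $\{\sqrt{t}\,\nabla(1+t\Delta)^{-m}\}_{t>0}$ on $L^{n^{*}}(\M)$; this contradicts the sharp lower bound~\eqref{szescNeg} once tested against a sequence concentrated on ends of different asymptotic dimensions $n_i$. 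I expect this endpoint counterexample to be the main obstacle: \eqref{szescNeg} only provides a \emph{uniform-in-$t$} lower bound at $p=n^{*}$ itself, so the vector-valued failure must be extracted by probing many scales $t_i$ simultaneously, against sequences carefully tailored to the anisotropic geometry of the ends, rather than by exploiting norm growth in a single scale as one can for $p>n^{*}$.
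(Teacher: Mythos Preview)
Your proposal has genuine gaps in both directions.

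\textbf{Positive direction.} You rely on a vector-valued Fefferman--Stein inequality for the Hardy--Littlewood maximal operator $M_{HL}$ on $\M$ itself. But $\M$ is non-doubling, and the passage from the scalar weak $(1,1)$ bound of \cite{DLS} to the $\ell^{2}$-valued inequality is not automatic: the classical Fefferman--Stein argument goes through the weighted estimate $\int(M_{HL}f)\,w\le C\int|f|\,M_{HL}w$, which uses a covering/doubling step you do not supply. The paper sidesteps this entirely. It works with the parametrix decomposition $(\Delta+k^{2})^{-m}=\sum_{j}H_{j}(k)$ and observes that only the diagonal piece $H_{1,a}$, which lives on the \emph{doubling} ends $\R^{n_i}\times\M_i$, requires a genuine vector-valued maximal inequality (Proposition~\ref{FS}). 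For every remaining piece $H_{1,b},H_{2},H_{3},H_{4}$ the crucial observation is that $\sup_{0<k\le1}|k^{2m-1}\nabla H_{j}(k)f(x)|$ is dominated pointwise by a \emph{linear} operator acting on $|f|$ (for instance $\tilde\omega(x)\int_{\M}\tilde\omega(y)|f(y)|\,dy$ for $H_{3}$, with $\tilde\omega(x)=\langle d(x_i^{\circ},x)\rangle^{1-n_i}$), after which Lemma~\ref{FSLem} upgrades the scalar $L^{p}$ or weak-$L^{1}$ bound to the $\ell^{2}$-valued one for free. Your proposed vector-valued weak $(1,1)$ via a Calder\'on--Zygmund decomposition is also problematic: the step ``the bad parts are absorbed using the scalar weak $(1,1)$ bound \ldots\ summed in $\ell^{2}$ by positivity'' is not valid, since weak-type bounds do not sum.

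\textbf{Negative direction at $p=n^{*}$.} You correctly recognise that \eqref{szescNeg} gives only a constant lower bound at $p=n^{*}$ and therefore does not by itself contradict $R$-boundedness; your suggested remedy (``probing many scales $t_{i}$ simultaneously'') is not carried out and is precisely the missing idea. The paper closes this gap by a different route (Corollary~\ref{cor2}): the Fefferman--Stein inequality at exponent $p$ implies $R$-boundedness of $\{\sqrt{t}\,\nabla(1+t\Delta)^{-m}:t>0\}$ on $L^{p}$ (Theorem~\ref{RBddThm}), which by \cite[Proposition~2.2 and Theorem~4.1]{CO20} forces boundedness of the vertical square function $S^{res,\nabla}_{m}$ on $L^{p}$; the latter is known to fail for every $p\ge n^{*}$ by \cite[Theorem~1.1]{BaSi}. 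This detour through the square function is what makes the endpoint failure accessible.
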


Another crucial idea for Harmonic analysis which is closely related to our discussion of Maximal 
functions is continuity of the Riesz transform acting on $L^p$ spaces:
$$\|\;|\nabla {\Delta}^{-1/2} f|\;\|_p\le C \|f\|_p\;\; \forall f\in L^p(X,\mu)\leqno (R_p)$$
and some constant $C>0$.
This is a basic issue which was already initiated in the setting of complete non-compact manifolds in \cite{Str} by Strichartz. It was proved in \cite{HS2} that  the Riesz transform  $T=\nabla \Delta^{-1/2}$ acts as a bounded operator from $L^p(\mathcal{M}) \to L^p(\mathcal{M}; T\mathcal{M})$ if and only if $1<p<n^*$ and that $T$ is also of weak-type $(1,1)$  in this class of manifolds. The results described in \cite{HS2} which includes non-doubling settings were continuation of studies developed in 
\cite{Ca2} and \cite{CCH}. 

An interesting application of the Riesz transform was given by Cometx and Ouhabaz in \cite{CO20}. They showed that the boundedness of the Riesz transform implies R-boundedness of the set $\{\sqrt{t}\nabla e^{-t\Delta}:t>0\}$ which is equivalent to the boundedness of the vertical square function in~\eqref{VSF} for the semigroup. We know that, in our setting of manifolds with ends, the range of $p$ for the boundedness of $(R_p)$ and R-boundedness of the set $\{\sqrt{t}\nabla e^{-t\Delta}:t>0\}$ coincides, see \cite{HS2,BaSi}. This implies that these two conditions are equivalent in our setting. However, we do not know  any direct proof for the inverse implication, that is, from R-boundedness to $(R_p)$ condition.

\section{Preliminaries}\label{Prelim}

Throughout this article, we fix $\M=(\R^{n_1}\times\M_1)\#\cdots\#(\R^{n_l}\times\M_l)$ as a complete Riemannian manifold with $l\geq 2$ ends, unless otherwise stated. We note again that $\Delta$ is the negative Laplace-Beltrami operator on $\M$ and $\nabla$ is the gradient corresponding to the Riemannian structure. The notation $K$ will be used to denote the compact subset (also the center) of $\M$ and the sets $K_i$ and $E_i$ will be as given in Definition~\ref{ConnectedSum}. That is, $\M\setminus K$ can be expressed as the disjoint union of the ends $E_i$, where $E_i\simeq \R^{n_i}\times\M_i\setminus K_i$. 

In what follows, we always assume that each $n_i$ is at least $3$. Recall that we set $n^*=\min_{i} n_i$. \\

{\bf Notation.} For $x\in\R^n$, we define $\l<x\r>:=(1+|x|^2)^{1/2}$. We use the notation $d(x,y)$ to denote the distance between two points $x$ and $y$ in some ambient Riemannian manifold. We also employ the notation $``\lesssim"$ and $``\gtrsim"$ to denote inequalities ``up to a constant". That is, for any two quantities $a, b\in\R$, by $a\lesssim b$ we mean that there exists a constant $C>0$ such that $a\leq C\cdot b$. The same holds for $``\gtrsim"$. We denote equivalence ``up to a constant" by $a\simeq b$ which means that there exist constants $c$ and $c'$ such that $\frac{1}{c}a\leq b\leq c'a$. Lastly, for a function $g(x,y)$ of two variables, we use the notation $\nabla_xg(x,y)$ to denote the gradient with respect to the first variable.\\

We write the Stein's maximal function in the resolvent form as 
\begin{equation*}
    M_m^{res}f(x)=\sup_{t>0}|(1+t\Delta)^{-m}f(x)|,
\end{equation*}
and keep the notation $M^{res,\Delta}_m$ and $M^{res,\nabla}_m$, defined in \eqref{HMF} and \eqref{VMF}, for the resolvent based horizontal and vertical maximal operators, respectively.
With the help of the standard integral representation we get the following relation between the resolvent and the semigroup generated by $\Delta$.
\begin{equation*}
(1+t\Delta)^{-m}= \frac{1}{\Gamma(m)}\int_0^\infty e^{-s} s^{m-1}e^{-st\Delta}ds,
\end{equation*}
where $\Gamma(m)$ is the Gamma function of $m$.
This allows us to control the function $M^{res}_m f(x)$ by the Stein's maximal function $M^{exp} f(x)$. Indeed,
\begin{align}\label{Res-Exp}
\sup_{t>0}|(1+t\Delta)^{-m}f(x)| \le&  \frac{1}{\Gamma(m)}\int_0^\infty e^{-s} s^{m-1}\sup_{t>0}|e^{-st\Delta}f(x)|ds\notag\\ 
=&\sup_{t>0}|e^{-t\Delta}f(x)| \frac{1}{\Gamma(m)}\int_0^\infty e^{-s} s^{m-1}ds \lesssim   \sup_{t>0}|e^{-t\Delta}f(x)|.
\end{align}
Thus, $M^{res}_m$ is bounded on $L^p$ whenever $M^{exp}$ is.

%\subsection{Definitions}

%Note that 
%$$
%\sup_{t>0}\||\sqrt t \nabla (1+t\Delta)^{-m}|\|_{p\to p}
%\le \| M^{res}  \|_{p\to p}
%$$
%\textcolor{red}{ Hence Theorem~\ref{t1} is a consequence of the following stronger results. } 

\subsection{Higher-order Resolvents of the Laplacian on \texorpdfstring{$\M$}{e}} In this section we recall, from \cite{BaSi}, several properties of the resolvent $(\Delta+k^2)^{-m}$ for $0<k\leq1$. For the origin of these resolvent parametrix construction ideas see also \cite{CCH} and \cite{HS2}.

We first recall the following key lemma from \cite{HS2} which was crucial for the parametrix construction of the resolvent $(\Delta+k^2)^{-1}$ on $\M$. For each end $\R^{n_i}\times \mathcal{M}_i$ we choose a point $x_i^\circ$ such that $x_i^\circ\in K_i$, where $K_i$ are compact subsets of $\R^{n_i}\times\mathcal{M}_i$.  
\begin{lemma}[Key Lemma]\label{KeyLem}
    Assume that each $n_i$ is at least 3. Let $v\in C_c^\infty(\mathcal{M},\R)$. Then there exists a function $u:\mathcal{M}\times\R_+\to \R$ such that $(\Delta+k^2)^{-1}u=v$ and such that on $ith$ end we have:
    \begin{align*}
        |u(x,k)|\leq C ||v||_\infty \langle d(x_i^\circ,x)\rangle^{-(n_i-2)}\exp(-kd(x_i^\circ,x))\;\;\forall x\in\R^{n_i}\times\mathcal{M}_i,\\
        |\nabla u(x,k)|\leq C||v||_\infty \langle d(x_i^\circ,x)\rangle^{-(n_i-1)}\exp(-kd(x_i^\circ,x))\;\;\forall x\in\R^{n_i}\times\mathcal{M}_i
    \end{align*}
    for some $C>0$.
\end{lemma}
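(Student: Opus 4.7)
The natural reading of the identity $(\Delta+k^2)^{-1}u=v$ is that $u$ is the resolvent applied to $v$, i.e.\ $(\Delta+k^2)u=v$ on $\M$; the estimates then describe how this resolvent kernel action decays on each end. The plan is to construct $u$ by a parametrix adapted to the end structure of $\M$, transferring the sharp decay from the product model spaces.

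First I would analyse the model resolvent on $\R^{n_i}\times\M_i$. Separation of variables using the $L^2$-eigenbasis $\{(\phi_\alpha,\lambda_\alpha)\}_{\alpha\geq 0}$ of $\Delta_{\M_i}$ (with $\lambda_0=0$ and $\phi_0$ constant) gives the Green's function
\begin{equation*}
G_k^{(i)}\big((x,y),(x',y')\big)=\sum_\alpha \phi_\alpha(y)\overline{\phi_\alpha(y')}\,G^{\R^{n_i}}_{\sqrt{\lambda_\alpha+k^2}}(x-x'),
\end{equation*}
where $G^{\R^n}_\mu$ is the Bessel--Macdonald Green's function of $\Delta+\mu^2$ on $\R^n$. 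For $n_i\geq 3$, asymptotics of $K_{(n_i-2)/2}$ yield the Coulomb-type envelopes $|G^{\R^{n_i}}_\mu(z)|\lesssim\langle z\rangle^{-(n_i-2)}e^{-\mu|z|}$ and $|\nabla_z G^{\R^{n_i}}_\mu(z)|\lesssim\langle z\rangle^{-(n_i-1)}e^{-\mu|z|}$. The $\alpha=0$ summand realises these bounds with $\mu=k$, and every $\alpha\geq 1$ summand is additionally suppressed by a factor $e^{-\sqrt{\lambda_1}|x-x'|}$ coming from the spectral gap of the compact manifold $\M_i$. Summing produces both displayed estimates on the model space, with a constant uniform for $k\in(0,1]$.

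Second, I would pass to $\M$ by a partition-of-unity gluing. Choose $1=\chi_0+\sum_{i\geq 1}\chi_i$ with $\chi_0$ supported near the centre $K$ and $\chi_i$ supported in $E_i\simeq(\R^{n_i}\times\M_i)\setminus K_i$, and take fattened cutoffs $\psi_i$ with $\psi_i\chi_i=\chi_i$. Form the trial
\begin{equation*}
u_0=\psi_0 Q_0(\chi_0 v)+\sum_{i\geq 1}\psi_i(\Delta_i+k^2)^{-1}(\chi_i v),
\end{equation*}
where $Q_0$ is any local inverse on a neighbourhood of $K$ and $(\Delta_i+k^2)^{-1}$ is the model resolvent on $\R^{n_i}\times\M_i$ transported through the identification with $E_i$. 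Because each $\chi_i v$ is supported in a bounded set, integrating it against the model Green's function yields, for $x\in E_i$, exactly the two stated bounds in $d(x_i^\circ,x)$ with constants $\lesssim\|v\|_\infty$. Then $(\Delta+k^2)u_0=v+Rv$, where $Rv=\sum_i[\Delta,\psi_i](\Delta_i+k^2)^{-1}(\chi_i v)$ is compactly supported in the gluing annuli. A Neumann iteration $u=u_0-(\Delta+k^2)^{-1}Rv+\cdots$, or a Fredholm correction after verifying $L^2$-compactness of $R$, produces the true resolvent; at each step the input to the model resolvent is again compactly supported near $K$, so the iteration preserves the decay profile on each end.

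The main obstacle is this uniformity in $k\in(0,1]$ together with the Bessel asymptotics: one has to know the envelope $\langle z\rangle^{-(n_i-2)}e^{-\mu|z|}$ effectively controls $G^{\R^{n_i}}_\mu$ on the full relevant range of $|z|$ and $\mu$, not only in the Coulomb window $\mu|z|\lesssim 1$. The hypothesis $n_i\geq 3$ is essential: it guarantees that the zero-mode Euclidean Green's function is a genuine integrable Newtonian potential carrying the correct $(n_i-2)$-decay, which is what allows both the gradient estimate and the Fredholm correction to be transferred through the gluing without degrading the final envelope.
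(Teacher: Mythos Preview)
The paper does not prove this lemma; it is quoted verbatim from \cite{HS2} and used as a black box in the parametrix construction. So there is no in-paper proof to compare against, only the strategy of \cite{HS2}, and your outline is broadly in the same spirit: model-space resolvent estimates on each end via the Bessel--Macdonald kernel, glued together by cutoffs, followed by a correction step.

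That said, there is a genuine gap in your correction step. The expression ``Neumann iteration $u=u_0-(\Delta+k^2)^{-1}Rv+\cdots$'' is confused: if you use the \emph{full} resolvent on the error $Rv$, this is an exact one-step identity (no dots), but then estimating $(\Delta+k^2)^{-1}(Rv)$ is exactly the statement you are trying to prove, so the argument is circular. If instead you mean to iterate the \emph{parametrix}, $u\approx Pv - PRv + PR^2v-\cdots$, you must show convergence, and there is no reason for $\|R(k)\|<1$ uniformly in $k\in(0,1]$. The Fredholm alternative you mention does give invertibility of $I+R(k)$ for each fixed $k>0$ (using injectivity of $\Delta+k^2$), but the real issue is the \emph{uniform} bound on $(I+R(k))^{-1}$ as $k\to 0^+$: the model resolvents, and hence $R(k)$, degenerate in that limit, and ruling out blow-up of the inverse requires a genuine low-energy analysis that you have not supplied. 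You correctly flag uniformity in $k$ as the main obstacle, but you locate it only in the Bessel asymptotics of the model kernel; the harder part is propagating that uniformity through the correction, and this is precisely where the hypothesis $n_i\ge 3$ and the detailed low-energy resolvent behaviour on each end (as carried out in \cite{HS2}) do the work.
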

With the help of the above Lemma, it was shown in \cite{HS2} that the resolvent operator $(\Delta+k^2)^{-1}$ can be decomposed into four components. That is, for $0<k\leq1$, the resolvent can be written as
\begin{equation*}
    (\Delta+k^2)^{-1}=\sum_{j=1}^4 G_j(k).
\end{equation*}
We recall below the definition of each of these components.
Let $\phi_i\in C^\infty(\M)$ be a function supported entirely in $\R^{n_i}\times\M_i\setminus K_i$ that is identically equal to 1 everywhere outside of a compact set. Define $v_i:=-\Delta \phi_i$ and note that $v_i$ is compactly supported. Let $u_i$ be the function for $v_i$ whose existence is asserted by Lemma~\ref{KeyLem}. The $G_1(k)$ term is entirely supported on the diagonal ends and has the following kernel
\begin{equation*}
    G_{1}(k)(x,y) := \sum_{i = 1}^{l} {(\Delta_{\R^{n_{i}}\times
		\mathcal{M}_{i}} + k^{2})}^{-1}(x,y) \phi_{i}(x) \phi_{i}(y).
\end{equation*}
 Let $G_{int}(k)$ be an interior parametrix for the resolvent that is supported close to the compact subset
 \begin{equation*}
     K_{\Delta}:=\{(x,x): x\in K\}\subset \M^2
 \end{equation*}
and agrees with the resolvent of $\Delta_{\R^{n_i}\times\M_i}$ in a small neighbourhood of $K_\Delta$ intersected with the support of $\nabla\phi_i(x)\phi_i(y)$. This gives us the $G_2(k)$ term whose kernel has the following representation
$$
G_{2}(k)(x,y) := \bigg(1 - \sum_{i = 1}^{l} \phi_{i}(x) \phi_{i}(y)\bigg)G_{int}(k)(x,y) .
$$
The $G_{3}(k)$ term has the nice property that its kernel is multiplicatively separable into functions of $x$ and $y$. That is,
$$
G_{3}(k)(x,y) := \sum_{i = 1}^{l} (\Delta_{\R^{n_{i}}\times
		\mathcal{M}_{i}} + k^{2})^{-1}(x_{i}^{\circ},y) u_{i}(x,k) \phi_{i}(y),
$$
where $(\Delta + k^{2})u_i=-\Delta \phi_i =\nu_i $.
The final term $G_4(k)$ is a correction term, and for this we first define the error term $E(k)$ as 
$$
(\Delta + k^{2})(G_{1}(k) + G_{2}(k) + G_{3}(k)) = I + E(k).
$$
The operator $G_4(k)$ is given by 
$$
G_{4}(k)(x,y) := - (\Delta + k^{2})^{-1} v_{y}(x),
$$
where $v_{y}(x) := E(k)(x,y)$ and is compactly supported. The error term was computed in \cite{HS2} and has the following representation
\begin{equation*}
    E(k)=\sum_{i=1}^l(E_1^i(k)+E_2^i(k))+E_3(k).
\end{equation*}
Here
$$
E_1^i(k)(x,y):=-2\nabla\phi_i(x)\phi_i(y)\l[\nabla_x(\Delta_{\R^{n_i}\times\M_i}+k^2)^{-1}(x,y)-\nabla_xG_{int}(k)(x,y)\r],
$$
$$
E_2^i(k)(x,y):=\phi_i(y)v_i(x)\l(-(\Delta_{\R^{n_i}\times\M_i}+k^2)^{-1}(x,y)+G_{int}(k)(x,y)+(\Delta_{\R^{n_i}\times\M_i}+k^2)^{-1}(x_i^{\circ},y)\r)
$$
for $i=1,...,l$, and
$$
E_3(k)(x,y):=\l((\Delta+k^2)G_{int}(k)(x,y)-\delta_y(x)\r)\l(1-\sum_{i=1}^l\phi_i(x)\phi_i(y)\r),
$$
where $\delta_y$ is the Dirac-delta function centered at $y$.\\

Since we are interested in the higher-order resolvents on $\M$, by the above decomposition we can write $(\Delta+k^2)^{-m}$ as
\begin{equation*}
    (\Delta+k^2)^{-m}=\frac{(-1)^{m-1}}{(m-1)!}\partial_{k^2}^{(m-1)}(\Delta+k^2)^{-1}=\sum_{j=1}^4 H_j^{(m)}(k),
\end{equation*}
where $H^{(m)}_j(k):=\frac{(-1)^{m-1}}{(m-1)!}\partial_{k^2}^{(m-1)}G_j(k)$. For simplicity, we use the shorthand notation $H_j(k)$.

Now, following \cite{HS2}, for $a=1,2$ we define weight functions $\omega_a : \mathcal{M} \times [0, 1] \to (0,\infty)$ by 
\begin{equation}\label{DefOmega}
\omega_a(x,k) = \begin{cases}  \hspace{2cm}   1, &\quad x \in K, \\
\langle d(x_i^\circ,x)\rangle^{-(n_i -a)} \exp(-{ckd(x_i^\circ,x)}), &\quad x\in\R^{n_i}\times\M_i\setminus K_i,\;1\leq i\leq l.
\end{cases}
\end{equation}
We recall below the propositions from \cite{BaSi} that estimates the terms $H_3(k)$ and $H_4(k)$ using these weight functions.

\begin{proposition}\cite[Proposition 3.4]{BaSi}\label{PropH3}
    The kernel of the operator $H_3(k)$ satisfies
    \begin{equation*}\label{H3a}
    |H_3(k)(x,y)|
\lesssim k^{-2(m-1)}\omega_2(x,k)\omega_2(y,k)
\end{equation*}
and
\begin{equation*}\label{H3b}
    |\nabla_x H_3(k)(x,y)|
\lesssim k^{-2(m-1)}\omega_1(x,k)\omega_2(y,k)
\end{equation*}
for all $x,y\in\M$, and $0<k\leq1$.
\end{proposition}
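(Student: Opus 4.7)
The plan is to apply the Leibniz rule to the defining formula $H_3(k)=\frac{(-1)^{m-1}}{(m-1)!}\partial_{k^2}^{m-1}G_3(k)$ and to estimate each resulting factor separately, leveraging the Key Lemma together with standard heat-kernel and resolvent estimates on the product ends. Since $G_3(k)$ is a finite sum over $i$, it suffices to bound a single summand $(\Delta_{\R^{n_i}\times\M_i}+k^2)^{-1}(x_i^\circ,y)\,u_i(x,k)\,\phi_i(y)$. Expanding by Leibniz yields a finite sum indexed by $0\le j\le m-1$ of products $\partial_{k^2}^{j}(\Delta_{\R^{n_i}\times\M_i}+k^2)^{-1}(x_i^\circ,y)\cdot \partial_{k^2}^{m-1-j}u_i(x,k)\,\phi_i(y)$, and the task reduces to pointwise control of each of the two non-trivial factors.

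For the resolvent factor I would use the identity $\partial_{k^2}^{j}(\Delta_{\R^{n_i}\times\M_i}+k^2)^{-1}=(-1)^{j}j!\,(\Delta_{\R^{n_i}\times\M_i}+k^2)^{-(j+1)}$ combined with the subordination formula $(\Delta_{\R^{n_i}\times\M_i}+k^2)^{-(j+1)}=\frac{1}{j!}\int_0^\infty t^{j}e^{-k^2 t}e^{-t\Delta_{\R^{n_i}\times\M_i}}\,dt$. On the product manifold $\R^{n_i}\times\M_i$, the heat kernel obeys Gaussian upper bounds with Euclidean-type volume $t^{n_i/2}$ at infinity. Splitting the $t$-integral at $t\simeq 1/k^2$ for the exponential cutoff and invoking the standard trick $t^{-n_i/2}e^{-d^2/ct}\lesssim\langle d\rangle^{-(n_i-2)}e^{-d^2/c't}$ on the polynomial-decay side yields $|(\Delta_{\R^{n_i}\times\M_i}+k^2)^{-(j+1)}(x_i^\circ,y)|\lesssim k^{-2j}\omega_2(y,k)$ on the $i$-th end, after a mild reduction of the decay constant.

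For the derivatives of $u_i$ I would exploit that (up to sign and a harmless discrepancy that I read as a typo) $u_i=(\Delta+k^2)^{-1}v_i$ with $v_i=-\Delta\phi_i$ compactly supported, so $\partial_{k^2}^{j}u_i=(-1)^{j}j!\,(\Delta+k^2)^{-(j+1)}v_i$ and likewise for $\partial_{k^2}^{j}\nabla_x u_i$. The target inequalities $|\partial_{k^2}^{j}u_i(x,k)|\lesssim k^{-2j}\omega_2(x,k)$ and $|\partial_{k^2}^{j}\nabla_x u_i(x,k)|\lesssim k^{-2j}\omega_1(x,k)$ would then be obtained either by induction on $j$, reiterating the parametrix construction underlying the Key Lemma and gaining one factor of $k^{-2}$ at each step, or directly via the heat-kernel representation on $\M$ using the Grigor'yan--Saloff-Coste bounds (together with gradient bounds on the product ends in the $\nabla_x$-case).

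Combining via Leibniz,
\begin{equation*}
|H_3(k)(x,y)|\lesssim \sum_{j=0}^{m-1}k^{-2j}\omega_2(y,k)\cdot k^{-2(m-1-j)}\omega_2(x,k)\lesssim k^{-2(m-1)}\omega_2(x,k)\omega_2(y,k),
\end{equation*}
and the gradient bound follows identically after replacing $u_i$ by $\nabla_x u_i$, using that $\nabla_x$ commutes with $\partial_{k^2}^{m-1}$. The main obstacle is the careful bookkeeping in the $k$-differentiation: each derivative of $\exp(-ckd(x_i^\circ,x))$ produces a polynomial factor in $d(x_i^\circ,x)$ that must be reabsorbed into $\omega_2$ (respectively $\omega_1$) by slightly shrinking the constant $c$, and this must be done uniformly for $0<k\le 1$, with separate attention to the regime $kd(x_i^\circ,x)\lesssim 1$ where the exponential is comparable to unity and the polynomial decay must carry the argument on its own.
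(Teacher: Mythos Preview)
The paper does not prove this proposition; it is quoted from \cite[Proposition~3.4]{BaSi} and stated without argument. So there is no in-paper proof to compare against, and your Leibniz-rule expansion is the natural strategy and almost certainly the one used in \cite{BaSi}.

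Two remarks on the execution. First, your reading of the Key Lemma is right: the displayed relation $(\Delta+k^2)^{-1}u=v$ is a typo, and the correct identity $(\Delta+k^2)u_i=v_i$ appears explicitly just below the definition of $G_3$, so indeed $\partial_{k^2}^{j}u_i=(-1)^{j}j!\,(\Delta+k^2)^{-(j+1)}v_i$. Second, of your two proposed routes for bounding $\partial_{k^2}^{j}u_i$ and $\partial_{k^2}^{j}\nabla_x u_i$, the heat-kernel route on $\M$ is not available for the gradient: as the paper itself flags in its open-problems section, no usable pointwise bounds on $\nabla_x e^{-t\Delta_\M}$ are known. You must therefore take the parametrix/induction route, and here the iteration is less innocent than you suggest: after one application of $(\Delta+k^2)^{-1}$ the data is no longer compactly supported but only decays like $\omega_2$, so the Key Lemma as stated does not apply directly and one needs its quantitative extension to inputs with $\omega_a$-type decay (this is implicit in \cite{HS2} and made explicit in \cite{BaSi}). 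The resolvent-factor bound on the product end is unproblematic; \cite[Proposition~2.2]{BaSi} records exactly the higher-power estimate you need.
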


\begin{proposition}\cite[Proposition 3.5]{BaSi}\label{PropH4}
 The kernel of the operator $H_4(k)$ satisfies
    \begin{equation*}\label{H4a}
    |H_4(k)(x,y)|
\lesssim k^{-2(m-1)}\omega_2(x,k)\omega_1(y,k)
\end{equation*}
and
\begin{equation*}\label{H4b}
    |\nabla_x H_4(k)(x,y)|
\lesssim k^{-2(m-1)}\omega_1(x,k)\omega_1(y,k)
\end{equation*}
for all $x,y\in\M$, and $0<k\leq1$.   
\end{proposition}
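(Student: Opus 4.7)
The plan is to exploit the defining formula $G_4(k)(x,y) = -(\Delta+k^2)^{-1} v_y(x)$ with $v_y(x):=E(k)(x,y)$, and to apply the Key Lemma~\ref{KeyLem} for each fixed $y$. The crucial structural observation is that $v_y$, viewed as a function of $x$, is compactly supported in a fixed compact set $\Omega\subset\M$ independent of $y$ and $k$: every summand of $E(k)$ carries a factor that is compactly supported in $x$, namely $\nabla\phi_i(x)$ in $E_1^i$, $v_i(x)=-\Delta\phi_i(x)$ in $E_2^i$, and the cutoff $1-\sum_j\phi_j(x)\phi_j(y)$ in $E_3$. Thus the Key Lemma controls $(\Delta+k^2)^{-1}v_y$ by $\|v_y\|_\infty\omega_2(x,k)$ and its $x$-gradient by $\|v_y\|_\infty\omega_1(x,k)$, reducing everything to a pointwise bound on $v_y$ itself.

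To pass from $G_4(k)$ to $H_4(k)=\frac{(-1)^{m-1}}{(m-1)!}\partial_{k^2}^{m-1}G_4(k)$, I would use the Leibniz rule together with $\partial_{k^2}^j(\Delta+k^2)^{-1}=(-1)^j j!(\Delta+k^2)^{-(j+1)}$ to write
\begin{equation*}
H_4(k)(x,y)=\sum_{j=0}^{m-1}c_j\,(\Delta+k^2)^{-(j+1)}\bigl[\partial_{k^2}^{m-1-j}v_y\bigr](x).
\end{equation*}
An iterated version of the Key Lemma, obtained by noting that each extra factor of $(\Delta+k^2)^{-1}$ applied to a function with $\omega_2$-decay produces at most an additional $k^{-2}$ and preserves that decay, then gives $|(\Delta+k^2)^{-(j+1)}[\partial_{k^2}^{m-1-j}v_y](x)|\lesssim k^{-2j}\|\partial_{k^2}^{m-1-j}v_y\|_\infty\omega_2(x,k)$, and a parallel estimate with $\omega_1(x,k)$ for the $x$-gradient.

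The heart of the argument is then the pointwise estimate
\begin{equation*}
\|\partial_{k^2}^{\ell}v_y\|_{L^\infty_x(\Omega)}\lesssim k^{-2\ell}\,\omega_1(y,k),\qquad 0\le\ell\le m-1.
\end{equation*}
For $E_1^i$, the factor $\nabla\phi_i(x)\phi_i(y)$ pins $x$ to a compact set while the $y$-dependence sits in $\nabla_x(\Delta_{\R^{n_i}\times\M_i}+k^2)^{-1}(x,y)-\nabla_x G_{int}(k)(x,y)$; for $y$ deep in the $i$-th end, the explicit Euclidean-type decay of the $x$-gradient of the product resolvent supplies precisely $\langle d(x_i^\circ,y)\rangle^{-(n_i-1)}e^{-ckd(x_i^\circ,y)}=\omega_1(y,k)$. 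For $E_2^i$, the three-term combination $-(\Delta_{\R^{n_i}\times\M_i}+k^2)^{-1}(x,y)+G_{int}(x,y)+(\Delta_{\R^{n_i}\times\M_i}+k^2)^{-1}(x_i^{\circ},y)$ is engineered to cancel the leading $\omega_2$-order behaviour in $y$, leaving a gradient-type remainder of size $\omega_1(y,k)$ (morally a finite-difference of the resolvent kernel along the segment joining $x_i^\circ$ to $x$). The $E_3$ contribution is harmless because its support forces both $x$ and $y$ to lie in a fixed neighbourhood of $K$, where $\omega_1\simeq 1$.

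The main obstacle I anticipate is this last step, in particular the $E_2^i$ term for $\ell\ge 1$. One must combine sharp asymptotics of $(\Delta_{\R^{n_i}\times\M_i}+k^2)^{-1}(x,y)$ as $y\to\infty$ with the cancellation structure of the three-term combination, and then show that the polynomial-in-$d(x_i^\circ,y)$ factors generated by each $\partial_{k^2}$ are absorbed into the exponential $e^{-ckd(x_i^\circ,y)}$ at the price of exactly one factor $k^{-2}$ per derivative; the bookkeeping must mesh with the iterated Key Lemma so that the total loss is no worse than $k^{-2(m-1)}$. Once that pointwise estimate is secured, summing in $j$ and invoking the Key Lemma (with $\omega_2$ or $\omega_1$ in $x$ as appropriate) closes both inequalities.
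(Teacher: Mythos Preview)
The paper does not prove this proposition: it is quoted from \cite[Proposition~3.5]{BaSi} and used as a black box, so there is no proof in the present paper to compare your argument against.

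For what it is worth, your outline is the natural one and mirrors what one expects the argument in \cite{BaSi} to be: reduce via the Key Lemma (and its iterate for higher powers of the resolvent) to the pointwise bound $\|\partial_{k^2}^{\ell}E(k)(\cdot,y)\|_{L^\infty_x(\Omega)}\lesssim k^{-2\ell}\omega_1(y,k)$ on a fixed compact $\Omega$, and then analyse $E_1^i,E_2^i,E_3$ separately. Your identification of $E_2^i$ as the delicate piece is correct: the three-term cancellation is precisely what upgrades the $y$-decay from $\omega_2$ (as in Proposition~\ref{PropH3}) to $\omega_1$, and tracking that cancellation through the $\partial_{k^2}$-derivatives is indeed the main bookkeeping burden.
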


%we rewrite the higher-order analogues of the key lemma \ref{KeyLem}, whose proof is mentioned in \cite[Proposition 3.1]{BaSi}.

%\begin{lemma}
 %   Let $v$ and $u$ be as defined in Lemma~\ref{KeyLem} such that $(\Delta+k^2)u=v$, and satisfies
  %  \begin{align*}
   %     |u(x,k)|\lesssim ||v||_\infty\omega_2(x,k),\;\text{ and }   \;\; |\nabla u(x,k)| \lesssim ||v||_\infty \omega_1(x,k)
   % \end{align*} for all $x\in\M$ and $0\leq k\leq 1$. For $j\in\mathbb{N}$, define
   % \begin{equation*}
    %    u^{(j)}:=\partial_{k^2}^{(j)}u=(-1)^{j}j!(\Delta+k^2)^{-(j+1)}v.
   % \end{equation*} Then,
    %\begin{equation*}
     %   |u^{(j)}(x,k)|\lesssim k^{-2j}||v||_\infty\omega_2(x,k),
    %\end{equation*} for all $x\in\M$ and $0<k\leq1$.
%\end{lemma}

Let $\mathcal{N}$ be a compact Riemannian manifold. Then $\R^n\times\mathcal{N}$ is a manifold that satisfies the doubling property. On this manifold, we show in the following lemma that the vertical maximal operator is bounded on $L^p$ for $1<p\leq\infty$ and satisfies weak type (1,1) estimates. %It is worth noticing that we prove the result here for the maximal operator involving the semigroup generated by $\Delta_{\R^n\times\M}$. 

\begin{lemma}\label{MaxLemma}
%Let $\mathcal{N}$ be a compact Riemannian manifold. 
The vertical maximal operator $M^{exp,\nabla}f(x)=\sup_{t>0}|\sqrt{t}\nabla e^{-t\Delta_{\R^n\times \mathcal{N}}}f(x)|$ 
   is bounded on $L^p(\R^n\times \mathcal{N})$ for $1<p\leq\infty$ and is weak-type $(1,1)$.
   The same statement holds for $M^{res,\nabla}_m=\sup_{t>0}|\sqrt{t}\nabla ({1+t\Delta_{\R^n\times \mathcal{N}}})^{-m}f(x)|$.
\end{lemma}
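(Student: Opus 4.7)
The plan is to dominate the vertical maximal function on $\mathcal{M}_0:=\R^n\times\mathcal{N}$ pointwise by the Hardy--Littlewood maximal function and then invoke the classical maximal inequality. Because $\mathcal{M}_0$ carries the product Riemannian metric, one has $\Delta_{\mathcal{M}_0}=\Delta_{\R^n}+\Delta_{\mathcal{N}}$ and the heat kernel factorises as
\begin{equation*}
p_t^{\mathcal{M}_0}\bigl((x_1,\nu_1),(x_2,\nu_2)\bigr)=p_t^{\R^n}(x_1,x_2)\,p_t^{\mathcal{N}}(\nu_1,\nu_2),
\end{equation*}
and $\mathcal{M}_0$ is a doubling metric measure space. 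The first and central step is to establish the Gaussian gradient bound
\begin{equation*}
\sqrt{t}\,\bigl|\nabla_{x}p_t^{\mathcal{M}_0}(x,y)\bigr|\lesssim \frac{1}{|B(x,\sqrt{t})|}\exp\!\bigl(-d(x,y)^2/(ct)\bigr)\qquad \text{for all } t>0. \qquad(\star)
\end{equation*}
On the Euclidean factor this is immediate from the explicit heat kernel. On the compact factor, short-time parabolic regularity gives $\sqrt{t}\,|\nabla_{\nu}p_t^{\mathcal{N}}(\nu_1,\nu_2)|\lesssim p_{ct}^{\mathcal{N}}(\nu_1,\nu_2)$ for bounded $t$, while the spectral gap of $\Delta_{\mathcal{N}}$ together with the eigenfunction expansion of $p_t^{\mathcal{N}}$ yields exponential decay $|\nabla_{\nu}p_t^{\mathcal{N}}(\nu_1,\nu_2)|\lesssim e^{-\lambda_1 t/2}$ for large $t$. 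Substituting these factor estimates into the Leibniz rule applied to the product formula, and treating separately the small-$t$ regime (local Euclidean-type behaviour on both factors) and the large-$t$ regime (where $|B(x,\sqrt{t})|\simeq t^{n/2}\mathrm{vol}(\mathcal{N})$ and $d(x,y)^2$ differs from $|x_1-x_2|^2$ by at most $\mathrm{diam}(\mathcal{N})^2$), produces $(\star)$.

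With $(\star)$ in hand, I split the integration domain into annuli $\{y:2^k\sqrt{t}\le d(x,y)<2^{k+1}\sqrt{t}\}$, $k\ge 0$, and use the doubling property of $|B(x,\cdot)|$ to sum the resulting geometric series. This yields the pointwise domination
\begin{equation*}
M^{exp,\nabla}f(x)\le \sup_{t>0}\int_{\mathcal{M}_0}\sqrt{t}\,\bigl|\nabla_x p_t^{\mathcal{M}_0}(x,y)\bigr|\,|f(y)|\,dy\lesssim M_{HL}f(x),
\end{equation*}
where $M_{HL}$ is the Hardy--Littlewood maximal operator on $\mathcal{M}_0$. Since $\mathcal{M}_0$ is doubling, $M_{HL}$ is bounded on $L^p$ for $1<p\le\infty$ and of weak-type $(1,1)$, and these bounds transfer to $M^{exp,\nabla}$.

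The resolvent case follows by subordination: writing
\begin{equation*}
\sqrt{t}\,\nabla(1+t\Delta)^{-m}f=\frac{1}{\Gamma(m)}\int_0^\infty e^{-s}s^{m-3/2}\,\sqrt{st}\,\nabla e^{-st\Delta}f\,ds
\end{equation*}
and taking the supremum in $t>0$ inside the integral produces $M^{res,\nabla}_m f(x)\le \frac{\Gamma(m-1/2)}{\Gamma(m)}\,M^{exp,\nabla}f(x)$; the integral $\int_0^\infty e^{-s}s^{m-3/2}\,ds$ converges at both endpoints for every integer $m\ge 1$, so the $L^p$ and weak-$L^1$ conclusions pass automatically to the resolvent maximal operator. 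The main obstacle I anticipate is the uniform verification of $(\star)$ in the large-$t$ regime, where one must use the spectral gap on $\mathcal{N}$ to absorb the factor $\sqrt{t}$ appearing in the second term of the Leibniz expansion into exponential decay; once $(\star)$ is secured the remainder of the argument is routine Calder\'on--Zygmund / doubling-space machinery.
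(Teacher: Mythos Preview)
Your proposal is correct and follows essentially the same route as the paper: establish a Gaussian gradient bound for the heat kernel on $\R^n\times\mathcal{N}$, deduce pointwise domination of $M^{exp,\nabla}$ by the Hardy--Littlewood maximal operator via an annular (equivalently, dyadic-sum) decomposition, and then invoke the doubling-space maximal theorem; the resolvent case is then reduced to the heat-kernel case by the same subordination formula the paper records in its preliminaries. Your write-up is in fact more explicit than the paper's on two points---the justification of the gradient bound $(\star)$ via the product structure and the spectral gap on $\mathcal{N}$, and the constant $\Gamma(m-\tfrac12)/\Gamma(m)$ in the subordination step---whereas the paper simply asserts the gradient estimate and the resolvent conclusion.
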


\begin{proof}
    The heat kernel on $\R^n\times\mathcal{N}$ has the form 
    $$    e^{-t\Delta_{\R^n\times \mathcal{N}}}(x,y) \lesssim \left(\frac{1}{ t^{n/2}}+\frac{1}{ t^{N/2}}\right)\exp\left(-\frac{d(x,y)^2}{4t}\right),$$
    where $N= \text{dim } \mathcal{N}+n$, and satisfies the following spatial derivative estimate
    \begin{equation*}
        |\nabla e^{-t\Delta_{\R^n\times\mathcal{N}}}(x,y)|\lesssim t^{-1/2} \left(\frac{1}{ t^{n/2}}+\frac{1}{ t^{N/2}}\right)\exp\left(-\frac{d(x,y)^2}{4t}\right).
    \end{equation*}
    On controlling the right hand side by an infinite sum we obtain
   \begin{equation*}
        |\sqrt{t}\nabla e^{-t\Delta_{\R^n\times\mathcal{N}}}(x,y)|\lesssim  \sum_{k=1}^\infty \frac{k^{-N/2}e^{-k}}{B(x,\sqrt{kt})}\chi_{B(x,\sqrt{kt})}.
    \end{equation*} 
    Thus,
    \begin{align*}
        M^{exp,\nabla}f(x)&=\sup_{t>0}|\sqrt{t}\nabla e^{-t\Delta_{\R^n\times \mathcal{N}}}f(x)|
        =\sup_{t>0}\l|\sqrt{t}\nabla \int_{\R^n\times\mathcal{N}}e^{-t\Delta_{\R^n\times \mathcal{N}}}(x,y)f(y)dy\r|\\ &\lesssim\sup_{t>0}\l|\int_{\R^n\times\M}\sum_{k=1}^\infty\frac{k^{-N/2}e^{-k}}{B(x,\sqrt{kt})}\chi_{B(x,\sqrt{kt})}\r|\\
        &\lesssim\sup_{t>0}\l|\sum_{k=1}^\infty k^{-N/2}e^{-k}\frac{1}{B(x,\sqrt{kt})}\int_{B(x,\sqrt{kt})}f(y)dy\r|\\
        &\lesssim Mf(x),\;\;\;\; \text{since} \sum_{k=1}^\infty k^{-N/2}e^{-k}<\infty.
    \end{align*}
    Here $Mf$ is the Hardy-Littlewood maximal function which is bounded on $L^p(\R^n\times \mathcal{N})$ for $1<p\leq\infty$ and is weak-type (1,1). This implies that $M^{exp,\nabla}f$, and hence $M^{res,\nabla}_mf$, are weak-type (1,1) and bounded on $L^p(\R^n\times\mathcal{N})$ for all $1<p\leq\infty$.
\end{proof}

\begin{proposition}\label{FS}
	Consider again the vertical maximal operator
 $M^{exp,\nabla}$ on $L^p(\R^n\times\mathcal{N})$. Then, for any sequence $(f_j)_{j=1}^\infty\in L^p(\R^n\times\mathcal{N})$
\begin{equation*}
\l|\l|\Bigg(\sum_{j=1}^\infty|M^{exp,\nabla}f_j|^2\Bigg)^{1/2}\r|\r|_{1,\infty}
\leq C\l|\l|\Bigg(\sum_{j=1}^\infty|f_j|^2\Bigg)^{1/2}\r|\r|_1
\end{equation*}
for some constant $C>0$ and
\begin{align*}
\l|\l|\Bigg(\sum_{j=1}^\infty|M^{exp,\nabla}f_j|^2\Bigg)^{1/2}\r|\r|_p
\leq A_{n,p,\mathcal{N}}\l|\l|\Bigg(\sum_{j=1}^\infty|f_j|^2\Bigg)^{1/2}\r|\r|_p
\end{align*}
for all $1<p<\infty$ and some constant $A_{n,p,\mathcal{N}}$ (depending on $p$ and the manifold $\R^n\times\mathcal{N}$). 
\end{proposition}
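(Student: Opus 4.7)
The plan is to reduce the vector-valued inequality to the classical Fefferman--Stein maximal inequality for the Hardy--Littlewood maximal function, by invoking the pointwise domination already established in the proof of Lemma~\ref{MaxLemma}. There, the Gaussian upper bound for $|\sqrt{t}\nabla e^{-t\Delta_{\R^n\times\mathcal{N}}}(x,y)|$ was expanded as a sum of normalized ball indicators, yielding
$$
M^{exp,\nabla} f(x) \;\lesssim\; Mf(x), \qquad x\in \R^n\times\mathcal{N},
$$
where $M$ denotes the Hardy--Littlewood maximal operator on the product space. The constant here depends only on $n$ and $\mathcal{N}$, not on $f$, which is what will allow us to read off the claimed dependence of $A_{n,p,\mathcal{N}}$.

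Applying this scalar inequality term-by-term to a sequence $(f_i)_{i\ge 1}$ gives the pointwise estimate
$$
\bigg(\sum_{i=1}^\infty |M^{exp,\nabla} f_i(x)|^2\bigg)^{1/2} \;\lesssim\; \bigg(\sum_{i=1}^\infty (Mf_i(x))^2\bigg)^{1/2},
$$
so the problem reduces to the corresponding estimate for $M$. Because $\mathcal{N}$ is compact, $\R^n\times\mathcal{N}$ (with its Riemannian measure) satisfies the doubling condition \eqref{Doubling} and is therefore a space of homogeneous type. The Fefferman--Stein vector-valued maximal inequality for $M$ holds in full generality on such spaces: the strong $L^p$ bound for $1<p<\infty$ is precisely \cite[Theorem~1.2]{GLY}, and the endpoint weak-type $(1,1)$ bound follows by the standard vector-valued Calder\'on--Zygmund decomposition argument in the doubling setting. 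Combining these two results with the pointwise majorization above yields both inequalities asserted in the proposition.

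There is no essential obstacle here; the statement is really a transfer of the scalar reduction from Lemma~\ref{MaxLemma} through well-established Fefferman--Stein machinery on homogeneous spaces. The mild point worth flagging is that one must use a version of the Fefferman--Stein inequality formulated genuinely on spaces of homogeneous type (rather than on $\R^d$), since the underlying manifold $\R^n\times\mathcal{N}$ is not a Euclidean space when $\dim \mathcal{N}\ge 1$; but this is standard and available in the literature cited in the introduction.
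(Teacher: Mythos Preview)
Your proposal is correct and follows essentially the same approach as the paper: both reduce to the pointwise domination $M^{exp,\nabla}f \lesssim Mf$ from Lemma~\ref{MaxLemma} and then invoke the Fefferman--Stein vector-valued maximal inequality on spaces of homogeneous type (specifically \cite[Theorem~1.2]{GLY}) to conclude the strong $L^p$ and weak-type $(1,1)$ bounds. Your write-up is slightly more explicit about why $\R^n\times\mathcal{N}$ is doubling, but the argument is the same.
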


\begin{proof}
Using the similar idea as in the proof of Lemma~\ref{MaxLemma} we obtain that $M^{exp,\nabla}$ is controlled by the Hardy-Littlewood maximal operator $M$. This implies that for a sequence of functions $(f_j)_{j=1}^\infty$ in $L^p(\R^n\times\mathcal{N})$ we have
\begin{align*}
 \bigg|\bigg|\bigg(\sum_{j=1}^\infty|M^{exp,\nabla}f_j|^2\bigg)^{1/2}\bigg|\bigg|_p
 &\lesssim\bigg|\bigg|\bigg(\sum_{j=1}^\infty|Mf_j|^2\bigg)^{1/2}\bigg|\bigg|_p
\leq A_{n,p,\mathcal{N}}\bigg|\bigg|\bigg(\sum_{j=1}^\infty|f_j|^2\bigg)^{1/2}\bigg|\bigg|_p,
\end{align*}
where the last inequality follows from \cite[Theorem~1.2]{GLY}, see also \cite[Theorem~1, p. 51]{St}, and holds for all $1<p<\infty$ and is also weak-type~(1,1).
\end{proof}

The following lemma for bounded linear operators on a Banach space $X$ is crucial to prove the vector-valued version of the vertical maximal function. The statement is quoted from \cite[Section 2.5, Chapter X, p. 450]{St} and we include its proof here for completeness.
\begin{lemma}\label{FSLem}
    Suppose $S$ is a bounded linear operator on $L^p(X,d\mu)$, where $X$ is a Banach space. Let $f=(f_j)_{j=1}^\infty$ be a sequence in $L^p(X,d\mu)$. Then we have the following relation
    \begin{equation}\label{FSEqnGen}
    \l|\l|\Big(\sum_{j=1}^\infty |Sf_j|^2\Big)^{1/2}\r|\r|_p\leq ||S||_{p\to p}\l|\l|\Big(\sum_{j=1}^\infty |f_j|^2\Big)^{1/2}\r|\r|_p.
    \end{equation}

    %$f=(f_1,...,f_j)$ be any $j$-tuple of functions in $L^p(X,d\mu)$ then $Sf=(Sf_1,...,Sf_j)$. F

\end{lemma}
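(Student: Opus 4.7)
The plan is to use the Marcinkiewicz--Zygmund Gaussian randomization trick, which transfers the scalar bound $\|S\|_{p\to p}$ to the $\ell^2$-valued setting without any loss of constant. I would fix an auxiliary probability space $(\Omega,\mathbb{P})$ carrying a sequence $\{g_j\}_{j\geq 1}$ of independent real standard Gaussians and record the key identity
\begin{equation*}
    \Big(\sum_j |a_j|^2\Big)^{p/2} \,=\, \gamma_p^{-1}\,\mathbb{E}\Big|\sum_j g_j a_j\Big|^p, \qquad \gamma_p:=\mathbb{E}|g_1|^p,
\end{equation*}
valid for all scalars $(a_j)$ with $\sum |a_j|^2<\infty$, which is immediate from the fact that $\sum_j g_j a_j$ is Gaussian with variance $\sum_j|a_j|^2$.

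Next I would apply this identity pointwise with $a_j=Sf_j(x)$, integrate in $x\in X$, and interchange $\mathbb{E}$ with $\int_X d\mu$ by Fubini. For $\omega$ fixed, the coefficients $g_j(\omega)$ are constants, so linearity of $S$ gives $\sum_j g_j(\omega)Sf_j = S\bigl(\sum_j g_j(\omega) f_j\bigr)$, and the scalar $L^p$-boundedness of $S$ then yields
\begin{equation*}
    \int_X \Big|\sum_j g_j\, Sf_j\Big|^p d\mu \,\le\, \|S\|_{p\to p}^{\,p}\int_X \Big|\sum_j g_j\, f_j\Big|^p d\mu.
\end{equation*}
Reversing the Fubini swap and applying the Gaussian identity a second time on the right converts the right-hand side back into $\|S\|_{p\to p}^{\,p}\,\bigl\|(\sum_j|f_j|^2)^{1/2}\bigr\|_p^{\,p}$, the factors $\gamma_p$ and $\gamma_p^{-1}$ cancelling. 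Taking $p$-th roots is the conclusion.

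The main obstacle I anticipate is purely technical rather than conceptual: every manipulation is unambiguous for finite sequences, so I would first establish the inequality for the truncation $(f_1,\dots,f_N,0,0,\dots)$ and then pass to $N\to\infty$ by monotone convergence, using that both sides are nondecreasing in $N$ (and may be assumed finite, otherwise there is nothing to prove). For complex-valued $f_j$ the same scheme works after replacing the real Gaussians by complex Gaussians $\xi_j\sim\mathcal{CN}(0,1)$, for which the analogous variance identity holds so that the constant remains exactly $\|S\|_{p\to p}$.
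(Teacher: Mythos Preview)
Your argument is correct and is essentially the same approach as the paper's: both exploit a rotationally invariant auxiliary integration so that $\int |\langle a,\omega\rangle|^p\,d\omega$ depends only on $|a|_{\ell^2}$, reducing the vector-valued estimate to the scalar bound for $S$ with the constant cancelling. The only cosmetic difference is that the paper integrates over the unit sphere $\sigma_J\subset\mathbb{C}^J$ (using $\int_{\sigma_J}|\langle v,\omega\rangle|^p\,d\omega=A_{J,p}|v|^p$), whereas you use i.i.d.\ Gaussians; these are the two standard, interchangeable presentations of the Marcinkiewicz--Zygmund argument.
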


\begin{proof}
To prove the above relation it is enough to show that for any $J\in\mathbb{N}$ we have
\begin{equation}
    \l|\l|\Big(\sum_{j=1}^{J} |Sf_j|^2\Big)^{1/2}\r|\r|_p\leq ||S||_{p\to p}\l|\l|\Big(\sum_{j=1}^{J} |f_j|^2\Big)^{1/2}\r|\r|_p.
    \end{equation}
For a unit vector $\o=(\o_{J})\in\mathbb{C}^{J}$ define $f_\o:=\l<f,\o\r>$ such that $Sf_\o=\l<Sf,\o\r>$, where $Sf=(Sf_j)_{j=1}^{J}$.
Let $\S_{J}$ be the boundary of the sphere $\mathbb{S}^{J}$ in $\mathbb{C}^{J}$. 
    Consider \begin{align*}
        \int_{\S_{J}}\l(\int_X |Sf_\o(x)|^pdx\r)d\o&=\int_{\S_{J}}\l(\int_X |\l<Sf(x),\o\r>|^pdx\r)d\o\\
        &=\int_{\S_{J}}\l(\int_X |Sf(x)|^p\l|\l<\frac{Sf(x)}{|Sf(x)|},\o\r>\r|^pdx\r)d\o\\
        &=\int_X |Sf(x)|^p\l(\int_{\S_{J}}\l|\l<\frac{Sf(x)}{|Sf(x)|},\o\r>\r|^pd\o\r) dx=A_{J,p}\int_X |Sf(x)|^p dx,
    \end{align*}
    where $A_{J,p}$ is a positive constant that depends on $J$ and $p$.
    We also have
    \begin{align*}
         \int_{\S_{J}}\l(\int_X |Sf_\o(x)|^pdx\r)d\o&\leq  ||S||_{p\to p}^p\int_{\S_{J}}\l(\int_X |f_\o(x)|^pdx\r)d\o\\
         &=||S||^p_{p\to p}\int_{\S_{J}}\l(\int_X |f(x)|^p\l|\l<\frac{f(x)}{|f(x)|},\o\r>\r|^pdx\r)d\o\\
        &=||S||^p_{p\to p}\int_X |f(x)|^p\l(\int_{\S_{J}}\l|\l<\frac{f(x)}{|f(x)|},\o\r>\r|^pd\o\r) dx\\
        &=A_{J,p}||S||^p_{p\to p}\int_X |f(x)|^p dx.
    \end{align*}
    Since the above two relations hold independent of $J$, we obtain \eqref{FSEqnGen}.
\end{proof}

\section{Proofs of Main Theorems}
%\com{AS: describe all assumptions in Thm 3.1}

%\com{AS: Note that the $G_p$ is not open ended here.}

 \subsection{Proof of Theorem~\ref{GpThm}}

	First note that 
	\begin{equation}\label{www}
	\|\;|{\sqrt t} \nabla (1+t\Delta)^{-m}|\;\|_{p \to p}\sim  {C} \quad \forall{0<t\leq1}.
	\end{equation}
	The proof of the above estimate is a consequence of ideas discussed  in Sections 5 and 7  of \cite{HS2}. The argument is a  modification of the proof of \cite[Proposition 5.1]{HS2}.  The similar argument is also described in \cite[Proposition 5.1]{BaSi}, see also \cite[Proposition 2.4]{HS2}.
	
	\bigskip
Now, it follows from \eqref{www} that %we can only 
it is enough to consider $t>1$.
	We substitute $t=1/k^2$ and in what follows we consider only $k<1$. 
Then, using the parametrix for the resolvent $(k^2+\Delta)^{-m}$, we obtain
\begin{align*}
    \|\;|{\sqrt t} \nabla (1+t\Delta)^{-m}|\;\|_{p \to p}=\|\;|k^{2m-1}\nabla(k^2+\Delta)^{-m}|\;\|_{p\to p}= \Big|\Big|\;\Big|k^{2m-1} \nabla \sum_{j=1}^4H_j(k)\Big|\;\Big|\Big|_{p \to p}.
\end{align*}
To obtain the $L^p$ bounds, we shall analyze all factors $H_j(k)$ for $j=1,\ldots, 4$. \\

$\bullet$ The $H_1(k)$ term. \\

From the definition of $H_1(k)$, we need to analyze the boundedness of 
\begin{equation}
k^{2m-1} \nabla \Big( (\Delta_{\R^{n_i} \times \mathcal{M}_i} + k^2)^{-m}(x, y) \phi_i(x) \phi_i(y) \Big). 
\end{equation}
The kernel of $H_1(k)$ can be viewed as an operator acting on $\R^{n_i} \times \mathcal{M}_i$.
 We split  this kernel into two pieces, according to whether the derivative $\nabla$ hits the function $\phi(x)$ or the resolvent factor.
 \medskip
 
Case (1): $H_{1,a}$--- when the derivative hits the resolvent. That is, we need to compute the $L^p$ norm of 
 \begin{equation}\label{G1Term}
     \phi_i k^{2m-1} \nabla(\Delta_{\R^{n_i} \times \mathcal{M}_i} + k^2)^{-m}   \phi_i.
 \end{equation} 
Let $R_{1,k}(x,y)= \phi_i(x)k^{2m-1}\nabla(\Delta_{\R^{n_i}\times\mathcal{M}_i}+k^2)^{-m}(x,y)\phi_i(y)$ be the kernel of the above operator. 
Define the norm $$||T(x,y)||_{L^1_xL^\infty_y}:=\sup_{y\in\M}\int_\M |T(x,y)|dx$$ for some operator $T$.
If we show that
 \begin{equation*}
     ||R_{1,k}(x,y)||_{L^1_xL^\infty_y}\leq C
 \end{equation*}
 and
 \begin{equation*}
     ||R_{1,k}(x,y)||_{L^\infty_xL^1_y}\leq C
 \end{equation*}
 for some constant $C>0$, then the operator in \eqref{G1Term} will be bounded on $L^p$ for $p=1$ and $p=\infty$, and hence for all $1\leq p\leq\infty$, by interpolation.
 Indeed, consider 
 \begin{align*}
     ||R_{1,k}(x,y)||_{L^1_xL^\infty_y}&=\sup_{y\in\mathcal{M}}\int_{\mathcal{M}}|R_{1,k}(x,y)|dx\\
     &=\sup_{y\in\R^{n_i}\times\mathcal{M}_i}\int_{\R^{n_i}\times\M_i}| \phi_i(x)k^{2m-1}\nabla(\Delta_{\R^{n_i}\times\mathcal{M}_i}+k^2)^{-m}(x,y)\phi_i(y)|dx\\
&\leq\sup_{y\in\R^{n_i}\times\mathcal{M}_i\setminus K_i}\int_{\R^{n_i}\times\M_i\setminus K_i}\,| k(d(x,y)^{-(N-1)}+d(x,y)^{-(n_i-1)})e^{-ckd(x,y)}|dx\\
     &\hspace{5cm}\text{ (follows by \cite[Proposition 2.2]{BaSi})}\\
     &\lesssim\int_0^\infty kr^{-(n_i-1)}e^{-ckr} r^{n_i-1}dr\leq C<\infty.
 \end{align*}
 The last relation follows by a change of variable, and the second last relation follows from the fact that $\phi_i$ takes the value 1 outside a compact subset $K_i$ of $\R^{n_i}\times\mathcal{M}_i$ and is zero elsewhere. Since the kernel is symmetric in $x$ and $y$ variables, we have $||R_{1,k}(x,y)||_{L^\infty_xL^1_y}\leq C$.
  \medskip
 
 Case (2): $H_{1,b}$--- when the derivative hits the function $\phi(x)$. Here we shall show that
 \begin{eqnarray*}
 	\|\nabla \phi_i k^{2m-1} (\Delta_{\R^{n_i} \times \mathcal{M}_i} + k^2)^{-m}\|_{p\to p } \le C < \infty
 \end{eqnarray*}
 for all  $1\leq p \le  n_i$.
 
 Set $D=\{(x,y)\in\mathcal{M}^2: d(x,y)\leq1\}$ and let $\chi_{D}$ be the characteristic function of the set $D$. By the standard argument of \cite[Proposition 2.2]{BaSi}, we get that
 \begin{equation*}
||\chi_Dk^{2m-1}(\Delta_{\R^{n_i}\times\mathcal{M}_i}+k^2)^{-m}||_{p\to p}\lesssim 1.
 \end{equation*}
 Indeed,
\begin{equation*}
|\chi_Dk^{2m-1}(\Delta_{\R^{n_i}\times\mathcal{M}_i}+k^2)^{-m}(x,y)| \le Cd(x,y)^{2-N}e^{-ckd(x,y)}
 \end{equation*}
 for some constant $C$ and for all $x,y \in \R^{n_i}\times\mathcal{M}_i$  independent of $k$. The operator now is bounded because it is estimated by a ``convolution" with $L^1(\R^{n_i}\times\mathcal{M}_i)$ function.

 Consider now the kernel $R_{2,k}(x,y)$ of the operator $(1-\chi_D)k^{2m-1}(\Delta_{\R^{n_i}\times\mathcal{M}_i}+k^2)^{-m}$. Then, again using \cite[Proposition 2.2]{BaSi} and following as in the $H_{1,a}$ term, we obtain
\begin{align*}
    ||R_{2,k}(x,y)||_{L^\infty_xL^q_y}&=\sup_{x\in\mathcal{M}}\bigg(\int_\mathcal{M}|R_{2,k}(x,y)|^qdy\bigg)^{1/q}\\
   &\lesssim\sup_{x\in\mathcal{M}}\bigg(\int_\mathcal{M}|kd(x,y)^{2-n_i}e^{-ckd(x,y)}|^qdy\bigg)^{1/q}\\
   &\simeq  \l(\int_{1}^\infty\left|kr^{2-n_i}e^{-ckr}\right|^{q}r^{n_i-1}dr\r)^{1/q}\\
   &\simeq \l(\int_k^\infty \frac{k^qr^{(2-n_i)q+n_i-1}}{k^{(2-n_i)q+n_i}}e^{-r}dr\r)^{1/q}
   \lesssim k^{-1+n_i(1-1/q)}\Big(\int_k^\infty r^{(2-n_i)q+n_i-1}e^{-r}dr\Big)^{1/q}\\
   &\lesssim k^{-1+n_i(1-1/q)}.
\end{align*}
Similarly,
\begin{equation*}
     ||R_{2,k}(x,y)||_{L^q_xL^\infty_y}\lesssim k^{-1+n_i(1-1/q)}.
\end{equation*}
This implies that this operator is bounded as a map from $L^{q'}\to L^\infty$ and from $L^1\to L^q$ with operator norm bounded by $k^{-1+n_i(1-1/q)}$. Interpolating, we find that
\begin{equation*}
    ||(1-\chi_D)k^{2m-1}(\Delta_{\R^{n_i}\times\mathcal{M}_i}+k^2)^{-m}||_{p\to q}\lesssim k^{-1+n_i(1/p-1/q)}
\end{equation*}
for all $p<q$. Hence,
\begin{align*}
    &||\nabla\phi_i(1-\chi_D)k^{2m-1}(\Delta_{\R^{n_i}\times\mathcal{M}_i}+k^2)^{-m}||_{p\to p}\\
    &\lesssim||(1-\chi_D)k^{2m-1}(\Delta_{\R^{n_i}\times\mathcal{M}_i}+k^2)^{-m}||_{p\to q}\\
    &\lesssim k^{-1+n_i(1/p-1/q)}\lesssim 1
\end{align*}
for all $p<q$ such that $1/p-1/q\geq1/n_i$. This, and the fact that $\nabla \phi_i$ is compactly supported, implies that 
\begin{equation*}
    ||\nabla\phi_ik^{2m-1}(\Delta_{\R^{n_i}\times\mathcal{M}_i}+k^2)^{-m}||_{p\to p}\lesssim 1
\end{equation*}
for all $p\leq n_i$.\\
 %Maybe a bit technical but should not be too difficult see \cite{HS2}
 
 %*****************H2 Term*********************************************************************
 
 $\bullet$ The $H_2(k)$ term. \\
 
 From the definition of $G_2(k)$, which occurs in the term $H_2(k)$, it can be noticed that the operators $\{\nabla H_2(k)\}_{k\in(0,1)}$ form a family of pseudodifferential operators of order $1-2m$ with the Schwartz kernel having compact support and depending smoothly on $k$. 
 At the scale of an individual chart on $\M$, it is clear that in local coordinates, the symbol of $\nabla H_2(k)$, denoted by $a_2(k)(x,\xi)$, will satisfy
 \begin{eqnarray*}
     |\partial_\xi^\alpha a_2(k)(x,\xi)|\lesssim (|\xi^2|+k^2)^{\frac{1-2m-|\alpha|}{2}}
 \end{eqnarray*}
 for all multi-indices $\alpha\geq0$ and $k\in(0,1)$. Thus, from the standard pseudodifferential operator theory, see for instance \cite[Section 0.2]{Taylor}, we get
 
\begin{equation*}
    |\nabla_x H_2(k)(x,y)|\lesssim k^{N+1-2m-b}d(x,y)^{-b}
\end{equation*}
for any $b>N+1-2m$. Setting $b=N-\frac{1}{2}$ then gives
\begin{equation*}
    |k^{2m-1}\nabla_x H_2(k)(x,y)|\lesssim k^{\frac{1}{2}} d(x,y)^{\frac{1}{2}-N}.
\end{equation*}
 Thus, the operator with the $H_2(k)$ term has kernel that is compactly supported and is controlled by $d(x,y)^{\frac{1}{2}-N}$. Therefore, we obtain the boundedness here for all $1\le p \le \infty$. \\

 %*****************H3 Term*********************************************************************
 
 $\bullet$ The $H_3(k)$ term.\\

 To obtain the boundedness in this case, we consider the kernel $H_3(k)(x,y)$. Now, Proposition~\ref{PropH3} gives
 \begin{align*}
     |H_3(k)(x,y)|\lesssim k^{-2(m-1)}\omega_2(x,k)\omega_2(y,k)
 \end{align*}
and
 \begin{equation*}
     |\nabla_x H_3(k)(x,y)|\lesssim k^{-2(m-1)}\omega_1(x,k)\omega_2(y,k),
 \end{equation*}
 where $\omega_a(x,k)$ is defined in \eqref{DefOmega} for $a=1,2$.
 \begin{align*}
\text{Consider }\hspace{1cm} \| k^{2m-1} \nabla H_3(k)f\|_{p}&=\l(\int_{\M}|k^{2m-1}\nabla H_3(k)f(x)|^p dx\r)^{1/p}\\
    &=\l(\int_{\M}\left|k^{2m-1}\int_{\M} \nabla_x H_3(k)(x,y)f(y) dy\right|^p dx\r)^{1/p}\\
    &\lesssim \l(\int_{\M}\left|k\int_{\M} \omega_1(x,k)\omega_2(y,k) f(y) dy\right|^p dx\r)^{1/p}\\
    &\simeq k ||f||_p\l(\int_\M|\omega_1(x,k)|^pdx\r)^{1/p} \l(\int_\M|\omega_2(y,k)|^{p'}dy\r)^{1/p'}.
\end{align*}
Now, depending on whether $x$ and $y$ are in the compact set or outside, we have the following outcomes from the definition of $\omega_a$
\begin{align*} \l(\int_\M|\omega_1(x,k)|^pdx\r)^{1/p}&= \l(\int_{ K_i}1 dx\r)^{1/p}+\l(\int_{\R^{n_i}\times\M_i\setminus K_i}|\omega_1(x,k)|^pdx\r)^{1/p}\\
&\simeq \l(\int_{\R^{n_i}\times\M_i\setminus K_i}\l|\left<d(x_i^\circ,x)\right>^{-(n_i-1)}e^{-ck(d(x_i^\circ,x)}\r|^{p}dx\r)^{1/p}\\
&\simeq  \l(\int_{1}^\infty\left|r^{-(n_i-1)}e^{-ckr}\right|^{p}r^{n_i-1}dr\r)^{1/p}\\
&\simeq  \l(\int_{1}^\infty r^{-(n_i-1)p+n_i-1}e^{-ckr}dr\r)^{1/p}\\
&\simeq\l(\int_k^\infty \frac{r^{-(n_i-1)p+n_i-1}}{k^{-(n_i-1)p+n_i}}e^{-r}dr\r)^{1/p}\\
&\simeq \l(k^{-\alpha}\int_k^\infty r^{\alpha-1}e^{-r}dr\r)^{1/p},\;\text{ where } \alpha=-(n_i-1)p+n_i.
\end{align*}
By simple computations we obtain
\begin{align}
\left(k^{-\alpha}\int_k^\infty r^{\alpha-1}e^{-r}dr\right)^{1/p}\lesssim \begin{cases}
    k^{(\frac{n_i}{p'}-1)}, &\alpha>0, i.e., p<\frac{n_i-1}{n_i},\\
    1,                       &\alpha<0, i.e., p>\frac{n_i-1}{n_i}.
    \end{cases}
\end{align}
Also,
\begin{align*}
 \l(\int_\M|\omega_2(y,k)|^{p'}dy\r)^{1/p'}&= \l(\int_{ K_j}1 dy\r)^{1/p}+    \l(\int_{\R^{n_j}\times \M_j\setminus K_j}|\omega_2(y,k)|^{p'}dy\r)^{1/p'}\\
 &\simeq \l(\int_{\R^{n_j}\times\M_j\setminus K_j}\l|\left<d(x_j^\circ,x)\right>^{-(n_j-2)}e^{-ck(d(x_j^\circ,x)}\r|^{p'}dx\r)^{1/p'}\\
 &\simeq  \l(\int_{1}^\infty\left|r^{-(n_j-2)}e^{-ckr}\right|^{p'}r^{n_j-1}dr\r)^{1/p'}\\
 &\simeq  \l(\int_{1}^\infty r^{-(n_j-2)p'+n_j-1}e^{-ckr}dr\r)^{1/p'}\\
 &\simeq \l(\int_k^\infty \frac{r^{-(n_j-2)p'+n_j-1}}{k^{-(n_j-2)p'+n_j}}e^{-r}dr\r)^{1/p'}\\
 &\simeq \l(k^{-\beta}\int_k^\infty r^{\beta-1}e^{-r}dr\r)^{1/p'}, \text{ where }\beta=-(n_j-2)p'+n_j.
\end{align*}
Set 
 \begin{align*}
\gamma_\beta(k)= \begin{cases}
    k^{(\frac{n_j}{p}-2)}, &\beta>0, i.e., p>\frac{n_j}{2},\\
    1,                       &\beta<0, i.e., p<\frac{n_j}{2}.
    \end{cases}
\end{align*}
This gives
 \begin{align}
\left(k^{-\beta}\int_k^\infty r^{\beta-1}e^{-r}dr\right)^{1/p'}\simeq
\gamma_\beta(k).
\end{align}
 We thus have
\begin{align*}
 \|k^{2m-1}\nabla H_{3}(k)\|_{p \to p} \lesssim \max_{n_i,n_j}
   k&\l(k^{-\alpha}\int_k^\infty r^{\alpha-1}e^{-r}dr\r)^{1/p}\left( k^{-\beta}\int_{k}^\infty s^{\beta-1}e^{-s}ds\right)^{1/p'}    ,
\end{align*}
 where
\begin{align}\label{Eq_G3}\nonumber
   k&\l(k^{-\alpha}\int_k^\infty r^{\alpha-1}e^{-r}dr\r)^{1/p}\left( k^{-\beta}\int_{k}^\infty s^{\beta-1}e^{-s}ds\right)^{1/p'}\\&\lesssim k 
   \begin{cases}
     k^{(\frac{n_i}{p'}-1)+(\frac{n_j}{p}-2)}, &\alpha>0,\beta>0 \implies\frac{n_j}{2}<p<\frac{n_i}{n_i-1},\\
    k^{(\frac{n_j}{p}-2)},              &\alpha<0,\beta>0\implies p>\frac{n_i}{n_i-1}\text{ and }p>\frac{n_j}{2},\\
    k^{(\frac{n_i}{p'}-1)},       &\alpha>0,\beta<0\implies p<\frac{n_i}{n_i-1}\text{ and  }p<\frac{n_j}{2} ,     \\
    1,                         & \alpha<0,\beta<0 \implies \frac{n_i}{n_i-1}<p<\frac{n_j}{2}.
    \end{cases}
\end{align}
Since $\frac{n_j}{2}\geq\frac{3}{2}$ for all $j$, and $\frac{n_i}{n_i-1}\leq \frac{3}{2}$ for all $i$, so the first case cannot happen, and from the remaining cases we obtain boundedness for all $1\leq p\leq n_j$. This is because the exponent $\frac{n_j}{p}-1$ of k in the second case is non-negative only if $p\leq n_j$.\\

 %*****************H4 Term********************************************************************* 
 
 $\bullet$ The $ H_4(k)$ term.\\
 
 This term can be treated in the same way as the $H_3(k)$ term, with the difference that it vanishes to an additional order in the right (primed) variable.
 Indeed, by Proposition~\ref{PropH4}, we have
\begin{align}\label{G4Cond}
|\nabla_x H_4(k)(x,y)|&\lesssim k^{-2(m-1)}\omega_1(x,k)\omega_1(y,k)\notag.
\end{align}
This implies
\begin{align*}
    ||k^{2m-1}\nabla H_4(k)f||_{p}&=\l(\int_{\M}|k^{2m-1}\nabla H_4(k)f(x)|^pdx\r)^{1/p}\\
    &=\l(\int_\M\left|\int_\M k^{2m-1}\nabla_x H_4(k)(x,y)f(y)dy\right|^p dx\r)^{1/p}\\
    &\;\;\;\times\l(\int_{\M}\left|k\int_{\M} \omega_1(x,k)\omega_1(y,k) f(y) dy\right|^p dx\r)^{1/p}\\
    &\simeq k ||f||_p\l(\int_\M|\omega_1(x,k)|^pdx\r)^{1/p} \l(\int_\M|\omega_1(y,k)|^{p'}dy\r)^{1/p'}.
\end{align*}
Following on similar lines as in the $H_3$ term, we obtain boundedness for all $1\leq p\leq\infty.$\\

%\newpage

Now, taking $H_j(k)$ terms together for $j=1,...,4$, we obtain $||\;|\sqrt{t}\nabla(1+t\Delta)^{-m}|\;||_{p\to p}\lesssim1$ for all $t>0$ and all $1\leq p\leq n^*$, where $n^*=\min_{i}\{n_i\}$. Also, from \eqref{Eq_G3} we obtain $||\;|\sqrt{t}\nabla(1+t\Delta)^{-m}|\;||_{p\to p}\lesssim (\sqrt{t})^{1-\frac{n^*}{p}}$ for $p\in [n^*,\infty]$ and $t\geq1$.\\

We shall now show that for $m=1$ we also have
\begin{equation}\label{a20}
	\|\;|{\sqrt t} \nabla (1+t\Delta)^{-1}|\;\|_{p \to p} \ge  {c}  (\sqrt t)^{1-n^*/p}   \quad \forall{t\geq1}
	\end{equation}
and for all $p\in [n^*, \infty]$, where $c>0$ is some constant. %This would imply unboundedness of the resolvent operator for $p\ge n^*$.\\
It can be noticed that in the previous part of the proof the only parts that did not give the uniform boundedness for the entire range of $p\in[1,\infty]$ were $H_1(k)$ and $H_3(k)$. Thus, to obtain \eqref{a20}, % unboundedness, 
we only have to look for these two cases. Since we are working with $m=1$, we only need to consider the cases $G_1(k)$ and $G_3(k)$. Therefore, we %should
estimate the following expression 
%Therefore,
\begin{align*}
\|\;| k\nabla (G_1(k)+G_3(k))f(x)|\;\|_{p\to p}
&=  \l|\l|\;\l|k \int_\M \nabla[G_1(k)(x,y)+G_3(k)(x,y)]f(y)dy\r|\;\r|\r|_{p\to p}.
\end{align*}

Recalling from the previous part of the proof, the $H_1(k)$ term were divided into two parts, $H_{1,a}$ and $H_{1,b}$ depending on whether the gradient hits the resolvent factor or the function $\phi_i$. The term where the gradient hits the resolvent, $H_{1,a}$, was proved to be uniformly bounded on $L^p$ for all $p\in[1,\infty]$. It is, therefore, sufficient to verify the lower bounds from  \eqref{a20} for the operator corresponding to the following kernel 
\begin{align}\label{G_1+G_3}
    &k\sum_{i = 1}^{l}\int_\M \l|\nabla \phi_{i}(x){(\Delta_{\R^{n_{i}}\times
		\mathcal{M}_{i}} + k^{2})}^{-1}(x,y)  \phi_{i}(y)f(y)+\nabla u_i(x,k)(\Delta_{\R^{n_{i}}\times
		\mathcal{M}_{i}} + k^{2})^{-1}(x_i^\circ,y)\phi_i(y)f(y)\r|dy.
\end{align}
%Now, if we compute the kernel in the first part of the above operator at $x=x_i^\circ$ for some $i$, then the unboundedness of the following operator will imply that of \eqref{G_1+G_3}.
%Now, if we compute the kernel in the first part of the above operator at $x=x_i^\circ$ for some $i$, 
Now, if we compute the first part of the above kernel at $x=x_i^\circ$ for some $i$, then to verify estimates for \eqref{G_1+G_3}, %it is enough to show that
it is enough to get lower bounds like \eqref{a20} for the operator corresponding to the following kernel
\begin{align}\label{G_1+G_3_x_i}
    k\l|\nabla \phi_{i}(x)+\nabla u_i(x,k)\r| \int_\M \l|(\Delta_{\R^{n_{i}}\times
		\mathcal{M}_{i}} + k^{2})^{-1}(x_i^\circ,y)\phi_i(y)f(y)\r|dy.
\end{align}
Thus, we are left to show that the $L^p$ norm of the operator with the kernel described in \eqref{G_1+G_3_x_i} is greater than $ck^{-(1-n^*/p)} $ for some $c>0$ and all $1>k>0$.

Let $a_k(x)=\nabla\phi_{i}(x)+\nabla u_i(x,k) $, then adding and subtracting  $\nabla u_i(x,0)$ in $a_k(x)$ we obtain
$$a_k(x)=\nabla (u_i(x,k)-u_i(x,0))+(\nabla \phi_i(x)+\nabla u_i(x,0)).$$
It has been shown in \cite{HS2} that the first term of $a_k(x)$ is uniformly bounded on $L^p(\M)$ for all $p\in(1,\infty)$. Therefore, the required estimates for  \eqref{G_1+G_3_x_i} will be implied by the lower bounds for 
\begin{align}
    k\l|\nabla \phi_{i}(x)+\nabla u_i(x,0)\r| \int_\M \l|(\Delta_{\R^{n_{i}}\times
		\mathcal{M}_{i}} + k^{2})^{-1}(x_i^\circ,y)\phi_i(y)f(y)\r|dy.
\end{align}
Let $a(x)=\l|\nabla \phi_{i}(x)+\nabla u_i(x,0)\r|$ and $f$ be a positive function. Also, let
$$Rf(x)=k a(x) \int_\M (\Delta_{\R^{n_{i}}\times
		\mathcal{M}_{i}} + k^{2})^{-1}(x_i^\circ,y)\phi_i(y)f(y)dy.$$
Then, by \cite[Corollary 2.3]{BaSi}, there exists a constant $C>0$ such that 
\begin{align*}
    Rf(x)&\geq C ka(x)\int_\M d(x_i^\circ,y)^{2-n_i}e^{-ckd(x_i^\circ,y)}\phi_i(y)f(y)dy.
\end{align*}
%Let $g(y)=d(x_i^\circ,y)^{2-n_i}e^{-ckd(x_i^\circ,y)}$, then applying H{\"o}lder's equality on the above integral in $L^1$ we see that $f\in L^p$ if $g\in L^q$, where $\frac{1}{p}+\frac{1}{q}=1$, and $|f|^p\simeq|g|^q$ almost everywhere. 

Thus,
\begin{align*}
Rf(x)&\gtrsim ka(x)\int_\M g(y)f(y)\phi_i(y)dy\simeq ka(x)\int_{\R^{n_i}\times\M_i\setminus K_i} g(y)f(y)dy,
\end{align*}
where $g(y)=d(x_i^\circ,y)^{2-n_i}e^{-ckd(x_i^\circ,y)}$.
Now,
\begin{align*}
||R||_{p\to p} &\gtrsim k\l(\int_\M |a(x)|^pdx\r)^{1/p}||g||_q
 \gtrsim k||g||_q\text{ (since } a(x)\not\equiv 0 )\\
 &\simeq k \gamma_\beta(k), \text{ where }\beta=-(n_i-2)q+n_i\\
 &\simeq k^{\frac{n_i}{p}-1}.
\end{align*}
That is,  $||R||_{p\to p}\gtrsim k^{\frac{n^*}{p}-1}$ for $0<k\leq1$ and $p\in[n^*,\infty]$, which further implies that \eqref{szescNeg} holds for all $t\geq1$ and $p\in [n^*,\infty]$.
This ends the proof of Theorem~\ref{GpThm}. \\

%\com{HS: Need a Lemma for the $\mathbb{R}^n$ case for maximal functions.}

\subsection{Proof of Theorem~\ref{maxThm}}

%\begin{proof}
The boundedness of the horizontal maximal operator follows from \cite[Theorem~3]{DLS}, \eqref{Res-Exp}, and the fact that we can write
\begin{equation}
    t\Delta(1+t\Delta)^{-m}=(1+t\Delta)^{-(m-1)}-(1+t\Delta)^{-m}.
\end{equation}
Therefore, we are interested in obtaining bounds for $M^{res,\nabla}_m$.
		Clearly, it follows from \eqref{szesc} that 
$$
\|M_m^{res,\nabla}f(x)\|_{p \to p } =\infty
$$
for all $p > n^*$. 

Again replacing $t$ by $\frac{1}{k^2}$ we obtain
\begin{align*}
    M_{m}^{res,\nabla} f(x)=\sup _{t>0}\left|\sqrt{t} \nabla(1+t \Delta)^{-m} f(x)\right|=\sup _{k>0}\left|k^{2m-1} \nabla\left(k^{2}+\Delta\right)^{-m} f(x)\right|.
\end{align*}
By the same argument as in Theorem~\ref{GpThm}, we focus on the case when $t>1$, that is, $0<k<1$.
\begin{align*}
    \left\|M^{res,\nabla}_m f\right\|_{p \to p} &=\left\|\sup_{0<k<1}\left|k^{2m-1} \nabla\left(k^{2}+\Delta\right)^{-m} f\right|\;\right\|_{p \to p} \\
&=\left\|\sup_{0<k<1}\left|k^{2m-1} \nabla \sum_{j=1}^{4} H_{j}(k) f\right|\;\right\|_{p \to p} \leq \sum_{j=1}^{4}\left\|\sup _{0<k<1}\left|k^{2m-1} \nabla H_{j}(k) f\right|\;\right\|_{p \to p}.
\end{align*}
We start with the boundedness of the $H_1(k)$ term.
\begin{align*}
\left\|\sup _{0<k<1}\left|k^{2m-1} \nabla H_1(k) f\right|\right\|_{p \rightarrow p}&=
   \l|\l| \sup_{0<k<1}\Big| k^{2m-1} \int_{\M} \nabla\Big(\left(\Delta_{\mathbb{R}^{n_i}\times \M_{i}}+k^{2}\right)^{-m}(\cdot, y) \phi_{i}(\cdot)\phi_i(y)\Big) f(y) d y\Big|\;\r|\r|_{p\to p}\\
&=\l|\l|\phi_{i}(\cdot) \sup_{0<k<1}\Big| k^{2m-1} \int_{\R^{n_i}\times\M_i} \nabla\left(\Delta_{\mathbb{R}^{n_i}\times \M_{i}}+k^{2}\right)^{-m}(\cdot, y) \phi_i(y) f(y) d y\Big|\;\r|\r|_{p\to p}\\
&\;\;\;+\l|\l|\nabla \phi_i(\cdot) \sup _{0<k<1}\Big| k^{2m-1} \int_{\R^{n_i}\times\M_i}\left(\Delta_{\mathbb{R}^{n_i}\times \M_{i}}+k^{2}\right)^{-m}(\cdot, y) \phi_i(y) f(y) d y\Big|\;\r|\r|_{p\to p}\\
&=I_{1}+I_{2}.
\end{align*}
Boundedness of $I_1$ holds for all $1<p\leq\infty$ and is weak-type (1,1) by Lemma~\ref{MaxLemma}.

For $I_2$, following on similar lines as in $H_{1,b}$ of Theorem~\ref{GpThm}, we take $\chi_D$ as the characteristic function of the set $D=\{(x,y)\in\mathcal{M}^2: d(x,y)\leq1\}$. Then, by \cite[Proposition~2.2]{BaSi}, we have
\begin{equation*}
    \l|\l|\nabla\phi_i(\cdot)\chi_D(\cdot,y)\int_{\R^{n_i}\times\M_i}\sup_{k}|k^{2m-1}(\Delta_{\R^{n_i}\times\M_i}+k^2)^{-m}(\cdot,y)|\phi_i(y) f(y)dy\r|\r|_{p\to p}\lesssim ||f||_p .
\end{equation*}
Consider now
\begin{align*}
    &\l|\l|\nabla \phi_i(\cdot)(1-\chi_D) \sup _{0<k<1}\Big| k^{2m-1} \int_{\R^{n_i}\times\M_i}\left(\Delta_{\mathbb{R}^{n_i}\times \M_{i}}+k^{2}\right)^{-m}(\cdot, y) \phi_i(y) f(y) d y\Big|\;\r|\r|_{p\to p}\\
    &=\l(\int_{\M}\left|\nabla \phi_i(x) (1-\chi_D)\sup _{0<k<1}\Big| k^{2m-1} \int_{\R^{n_i}\times\M_i}\left(\Delta_{\mathbb{R}^{n_i}\times \M_{i}}+k^{2}\right)^{-m}(x, y) \phi_i(y) f(y) d y\Big|\right|^{p} d x\r)^{1/p}\\  
     &\lesssim ||f||_p\l(\int_{\M}\Bigg|\nabla \phi_i(x) \sup _{0<k<1}\Big| k \l(\int_{\R^{n_i}\times\M_i\setminus K_i}\l|d(x,y)^{-(n_i-2)}e^{-ckd(x,y)}\r|^{p'}dy\r)^{1/p'}\Big|\Bigg|^p dx\r)^{1/p}\\
     &\lesssim ||f||_p\l(\int_{\text{ supp }\nabla \phi_i}\Bigg|\sup _{0<k<1}\Big| k \l(\int_{\R^{n_i}\times\M_i\setminus K_i}\l|d(x,y)^{-(n_i-2)}e^{-ckd(x,y)}\r|^{p'}dy\r)^{1/p'}\Big|\Bigg|^p dx\r)^{1/p}\\
     &\lesssim ||f||_p \sup _{0<k<1} k \l(\int_{1}^\infty \l|r^{-(n_i-2)}e^{-ckr}\r|^{p'}r^{n_i-1}dr\r)^{1/p'}\;\text{(since }\nabla\phi_i\text{ is compactly supported)}\\
     &\lesssim ||f||_p\sup _{0<k<1} k \l(\int_{k}^\infty \frac{r^{-(n_i-2)p'+n_i-1}}{k^{-(n_i-2)p'+n_i}}e^{-r}dr\r)^{1/p'},
\end{align*}
where the third inequality follows again by \cite[Proposition~2.2]{BaSi} and the fact that $\phi_i$ is supported outside a compact set.
The above supremum is finite by the $H_3(k)$ term of Theorem~\ref{GpThm} and thus bounded for $1<p\leq n_i$. Hence, $I_2$ is bounded for $1<p\leq n_i$. \\
It is also bounded when $p=1$. Indeed,
\begin{align*}
    I_2&=\int_{\M}\left|\nabla \phi_i(x) \sup _{0<k<1}\Big| k \int_{\R^{n_i}\times\M_i}\left(\Delta_{\mathbb{R}^{n_i}\times \M_{i}}+k^{2}\right)^{-1}(x, y) \phi_i(y) f(y) d y\Big|\right|d x\\
     &\lesssim ||f||_1\int_{\text{ supp }\nabla \phi_i}\Bigg|\sup _{0<k<1}\Big| k \sup_{y\in\M}\l|d(x,y)^{-(n_i-2)}e^{-ckd(x,y)}\phi_i(y)\r|\Big|\Bigg| dx\\
     &\simeq ||f||_1\int_{\text{ supp }\nabla \phi_i}\Big|  \sup_{y\in\R^{n_i}\times\M_i\setminus K_i}\sup _{0<k<1}\l|k d(x,y)^{-(n_i-2)}e^{-ckd(x,y)}\r|\Big|dx\simeq ||f||_1\int_{\text{ supp }\nabla \phi_i} \sup_{y\in\R^{n_i}\times\M_i\setminus K_i}\l| d(x,y)^{-(n_i-1)}\r|dx\\
     &\lesssim ||f||_1\sup_{x\in\text{ supp }\nabla \phi_i} \sup_{y\in\R^{n_i}\times\M_i\setminus K_i}\l| d(x,y)^{-(n_i-1)}\r|\lesssim||f||_1.
\end{align*}

Since the $H_2(k)$ term is again a pseudodifferential operator, its boundedness holds in the same way as in Theorem~\ref{GpThm} for all $1\le p \le \infty$. \\

For $H_3(k)$ we follow the similar procedure as done in Theorem~\ref{GpThm}.

{\it Case $p>\frac{n_i}{n_i-1}$.}
We have 
\begin{align}\label{g3imax}
&\l|\l| \sup_{0<k< 1}|k^{2m-1}\nabla H_3(k)f|\;\r|\r|_{p}=\l|\l| \sup_{0<k< 1}|k^{2m-1}\nabla \int_\M H_3(k)(\cdot,y)f(y)dy|\;\r|\r|_{p}\\
&=\l(\int_{\M}\sup_{0<k<1}\left|k^{2m-1}\int_{\M} \nabla_x H_3(k)(x,y)f(y) dy\right|^p dx\r)^{1/p}\notag\\
    &\lesssim \l(\int_{\M}\sup_{0<k<1}\left|k\int_{\M} \omega_1(x,k)\omega_2(y,k) f(y) dy\right|^p dx\r)^{1/p}\notag\\
    &\lesssim ||f||_p \l(\int_{\M}\sup_{0<k<1} |\omega_1(x,k)|^p k^p \left(\int_{\M} |\omega_2(y,k)|^{p'} dy\right)^{p/p'} dx\r)^{1/p}\notag\\
    &\lesssim ||f||_p\l(\int_{\R^{n_i}\times\M_i\setminus K_i} \sup_{0<k<1} \l|\left<d(x_i^\circ,x)\right>^{-(n_i-1)}e^{-ck(d(x_i^\circ,x)}\r|^p k^p \left( \int_{\M} |\omega_2(y,k)|^{p'} dy\right)^{p/p'} dx\r)^{1/p}\notag\\
    &\lesssim ||f||_p\l(\int_{\R^{n_i}\times\M_i\setminus K_i} \l|\left<d(x_i^\circ,x)\right>^{-(n_i-1)}\r|^p dx\r)^{1/p} \sup_{0<k<1}\left(k\gamma_\beta(k)\right)\notag,
\end{align}
where the first integral is obtained by controlling $e^{-ckd(x_i^\circ,x)}$ by $1$ and is finite for all $(n_i-1)p>n_i$, i.e., $p>\frac{n_i}{n_i-1}$. The supremum, on the other hand, is finite by the $H_3(k)$ term of Theorem~\ref{GpThm}. This gives the boundedness for $\frac{n^*}{n^*-1} <p\leq n_i$.
Hence, the $H_3(k)$ term is bounded here for all $\frac{n^*}{n^*-1} <p\leq n^*$.

{\it Case $p=1$.}
In the next step we shall prove that $H_3(k)$ is of weak-type $(1,1)$. % By interpolation it will show the continuity for the whole range $1<p \le n^* $.
When $p=1$ we note that 
$$
\sup_k| k\omega_1(x,k)| \le C \omega(x),
$$
where $\omega(x)=\l< d(x_i^\circ,x)\r>^{-n_i}$ for 
$x \in \R^{n_i}\times\M_i$ and $\omega(x)=1$ for $x\in K$. 
In addition, we note that the $L^\infty$ norm of $\omega_2(y,k)$ is uniformly (independently of $k$) bounded. 
Since the function $d(x_i^\circ,x)^{-n_i}$ is in $L^{1,\infty}$, we obtain that the maximal $H_3(k)$ term is weak-type (1,1).
Next, by  interpolation it follows that the $H_3$ part  is continuous  
for the whole range $1<p \le n^* $.

\medskip
The analog of the argument for $H_3$ shows the boundedness of the $H_4(k)$ term but for $1<p\leq \infty$. In fact, we obtain the weak-type (1,1) estimates  by a straightforward modification of the argument from \eqref{g3imax}. Thus, the boundedness of the maximal function is obtained on $L^p$ spaces for $1<p\leq n^*$ by combining the above $H_j(k)$ terms $(1\leq j\leq 4)$, and is weak-type (1,1).\\

%\end{proof}

We now prove the Fefferman-Stein maximal inequality that is in our case  Theorem~\ref{FSThm}.
\subsection{Proof of Theorem~\ref{FSThm}}
%\com{AS: I expect that the proof has the same structure as before}
%\begin{proof}
First we note that if we consider supremum taken in \eqref{VMF} only over $0<t <1$
then  the maximal operator can essentially be discussed in the same way as Proposition~\ref{FS}. Therefore, we only
consider the range $t\ge 1$ or equivalently $0<k \le 1$.

Similarly, as before, we consider $H_{1,a}$--- when the derivative hits the resolvent. That is, we need to compute the $L^p$ norm of 
 \begin{equation}
     \phi_i k^{2m-1} \nabla(\Delta_{\R^{n_i} \times \mathcal{M}_i} + k^2)^{-m}   \phi_i.
 \end{equation}
The estimates required here for part $H_{1,a}$ follows  directly from Proposition \ref{FS}.
We discuss the part $H_{1,b}$ later together with $H_3$.
Recall that  the  $H_2$ term can be estimated by 
\begin{equation*}
    |k^{2m-1}\nabla_x H_2(k)(x,y)|\lesssim k^{\frac{1}{2}} d(x,y)^{\frac{1}{2}-N}.
\end{equation*}
It follows that we can replace supremum over $0<k \le 1$ for $H_2$ and so we end up with 
the linear operator bounded by $d(x,y)^{\frac{1}{2}-N}$. In consequence, we can use the same
argument as in the proof of Theorem~\ref{GpThm} and Lemma~\ref{FSLem} to verify continuity for all $1<p<\infty$ for the $H_2$ term .

First we observe that $$\sup_{k\le 1}k(1+r)^{1-n}e^{-rk} \le (1+r)^{-n}.$$ Then, 
assuming  that $f(x) \ge 0$ and that  $f\in L^1(\M)$, we have 
\begin{align}\label{H3sup}
    \sup_k|k^{2m-1}\nabla H_3(k)f(x)|&=\sup_{k}\l|k^{2m-1}\int_\M\nabla H_3(k)(x,y)f(y)dy\r|\\
    &\lesssim \sup_k\l|k\o_1(x,k)\int_\M\o_2(y,k)f(y)dy\r|\nonumber\\
    &\lesssim \sup_k\l|k\o_1(x,k)\r|\sup_{y\in\R^{n_i}\times\M_i\setminus K_i} \l|d(x_i^\circ,y)^{-(n_i-2)}\r|\int_\M f(y)dy\nonumber\\
    &\lesssim \o(x) \int_\M f(y)dy\nonumber,
\end{align}
where $\omega(x)=\l<d(x_i^\circ,x)\r>^{-n_i}$ for $x\in\R^{n_i}\times\M_i\setminus K_i$ and $\omega(x)=1$ for $x\in K$.\\
Now choose a sequence $\bar{f}=\{f_j\}_{j=1}^\infty$. Note that, without loss of generality, we can assume $f_j(x) \ge 0$. Then 
\begin{align*}
\sum_j\l(\sup_k|k^{2m-1}\nabla H_3(k){f_j}(x)|\r)^2
\le  C \omega(x)^2   \sum_j\l|\int_{\M} 
{f_j}dy\r|^2=C \omega(x)^2 \l|\Lambda( \bar{f})\r|^2,
\end{align*}
where $\Lambda(f)=\int_{\M} 
f(y)dy$. Now $\Lambda$ is a bounded linear function on $L^1(\M)$ so we can apply Lemma~\ref{FSLem}. This  validates weak-type $(1,1)$.

Observe next that 
$$
\omega_1(x,k) \le C \tilde{\omega}(x)  \quad
\mbox{and that} \quad \sup_k k\omega_2(x,k) \le C \tilde{\omega}(x), $$
where $ \tilde{\omega}(x) =\l< d(x_i^\circ,x)\r>^{1-n_i}$ for 
$x \in \R^{n_i}\times\M_i\setminus K_i$ and $\tilde{\omega}(x)=1$ for $x\in K$. 
To consider the range $2<p<n^*$, we slightly rearrange \eqref{H3sup} and we note that if $f(x) \ge 0$, then
%To prove the relation in \eqref{H3sup} for 
%$2<p<n^*$ we note that if $f(x) \ge 0$, then
\begin{align*}
    \sup_k|k^{2m-1}\nabla H_3(k)f(x)|&\lesssim \sup_k\l|\o_1(x,k)\int_\M k\o_2(y,k)f(y)dy\r|\nonumber\\
    &\lesssim \tilde{\o}(x) \int_\M \tilde{\o}(y)f(y)dy\nonumber.
\end{align*}
Thus, for $\bar{f}=\{f_j\}_{j=1}^\infty$ with each $f_j(x)\geq0$, we have
\begin{align*}
   \sum_{j}\l( \sup_k|k^{2m-1}\nabla H_3(k)f_j(x)|\r)^2\lesssim\sum_j\l|\tilde{\o}(x) \int_\M \tilde{\o}(y)f_j(y)dy\r|^2\simeq|\tilde{\Lambda}(\bar{f})|^2,
\end{align*}
where $\tilde{\Lambda}(f)= \tilde{\omega}(x) \int_{\M} \tilde{\omega}(y)f(y)dy$. Note that 
the operator $\tilde{\Lambda}$ acts continuously  on $L^p(\M)$ if and only
if $(n^*)' < p < n^*$. 
Using interpolation with weak-type 
$(1,1)$ we show boundedness of the $H_3$ term
for all $1 < p < n^*$. For the $H_{1,b}$ term, observe that
\begin{align*}
    \nabla\phi_i(x)&\sup_{0<k<1}\l|k^{2m-1}\int_{\R^{n_i}\times\M_i}(\Delta_{\R^{n_i}\times\M_i}+k^2)^{-m}(x,y)\phi_i(y)f(y)dy\r|\\
&\lesssim\nabla\phi_i(x)\sup_{0<k<1}\l|k\int_{\R^{n_i}\times\M_i\setminus K_i}d(x,y)^{-(n_i-2)}e^{-ckd(x,y)}f(y)dy\r|\\&\lesssim \int_{\R^{n_i}\times\M_i\setminus K_i} \l<d(x,y)\r>^{-(n_i-1)}f(y)dy,\; \text{ for }x\in \text{supp}\nabla\phi_i \text{ (since }\nabla\phi_i(x)\text{ is compactly supported)}.
\end{align*}
Just like in the $H_3$ term, the above integral is also controlled by a bounded linear functional on $L^p$ if and only if $1<p<n^*$, and is weak type~(1,1) bounded on $L^1$. Again, for a sequence $\bar{f}=\{f_j\}_{j=1}^\infty$, the $H_{1,b}$ term satisfies the Fefferman-Stein inequality.
The proof for the $H_4$ term follows on the same lines as $H_3$ and as done in Theorems~\ref{GpThm} and \ref{maxThm}. However, the boundedness for $H_4$ holds for $1<p<\infty$ with weak type~(1,1) at $L^1$. Clubbing the boundedness of each $H_i$ term, we obtain the Fefferman-Stein maximal inequality for the vertical maximal function on $L^p(\M)$ for $1<p<n^*$ with weak type(1,1) estimates at $L^1$. The unboundedness for $p\geq n^*$ is shown in Corollary~\ref{cor2}.

%\end{proof}

\section{Square functions and R-boundedness} \label{R}

In this section we show that the boundedness of vertical maximal function implies the boundedness of the vertical square function. This is possible with the help of the R-boundedness %(Rademacher boundedness) 
of the set $\{\sqrt{t}\nabla(1+t\Delta)^{-m}:t>0\}$. Before we jump to our results, we recall some basic definitions and known results required for our proof.

\begin{definition}(\cite[Definition 8.1]{ABS2})\label{R-Bdd_Equiv}
Let $X$ and $Y$ be Banach spaces and let $\mathcal{T}\subseteq B(X,Y)$ be a family of bounded operators. 
\begin{enumerate}
    \item[(a)] R-boundedness: $\mathcal{T}$ is said to be R-bounded if there exists a constant $C\geq 0$ such that for all finite sequences $(T_j)_{j=1}^J$ in $\mathcal{T}$ and $(x_j)_{j=1}^J$ in $X$,
    \begin{equation*}
        \l|\l|\sum_{j=1}^J\epsilon_j T_j x_j\r|\r|_{L^2(\Omega,Y)}\leq C\l|\l|\sum_{j=1}^J\epsilon_j x_j\r|\r|_{L^2(\Omega,X)}
    \end{equation*}
    or equivalently,
    \begin{equation*}
\mathbb{E}\l|\l|\sum_{j=1}^J\epsilon_j T_j x_j\r|\r|_{L^2(\Omega,Y)}\leq C\mathbb{E}\l|\l|\sum_{j=1}^J\epsilon_j x_j\r|\r|_{L^2(\Omega,X)},
   \end{equation*}
   where $\mathbb{E}$ is the usual expectation and $(\epsilon_j)_{j\geq1}$ is a Rademacher sequence on a fixed probability space $(\Omega,\mathbb{P})$. 
   \item[(b)] $\ell^2$-boundedness: If $X$ and $Y$ are Banach lattices, then $\mathcal{T}$ is said to be $\ell^2$-bounded if there exists a constant $C>0$ such that for all finite sequences $(T_j)_{j=1}^J$ in $\mathcal{T}$ and $(x_j)_{j=1}^J$ in $X$,
   \begin{equation*}
    \l|\l|\Bigg(\sum_{j=1}^J|T_j x_j|^2\Bigg)^{1/2}\r|\r|_{L^2(\Omega,Y)}\leq C\l|\l|\Bigg(\sum_{j=1}^J| x_j|^2\Bigg)^{1/2}\r|\r|_{L^2(\Omega,X)}.
    \end{equation*}
\end{enumerate}
The least admissible constants here are called the R-bound and the $\ell^2$-bound, and are denoted by $\mathcal{R}(\mathcal{T})$ and $\ell^2(\mathcal{T})$, respectively.

\end{definition}

\begin{remark}
In $(a)$ and $(b)$ of Definition \ref{R-Bdd_Equiv} we can replace $L^2(\Omega,Y)$ by $L^p(\Omega,Y)$, and $L^2(\Omega,X)$ by $L^q(\Omega,X)$ for $1\leq p,q<\infty$. In such situations we obtain an equivalent definition of the R-boundedness, but with a possibly different constant that we denote by $\mathcal{R}_{p,q}(\mathcal{T})$. For $p=q$, it is convenient to write $\mathcal{R}_{p}(\mathcal{T}):=\mathcal{R}_{p,p}(\mathcal{T})$. We also refer to \cite[Proposition~8.1.5]{ABS2} for the definition of R-boundedness on $L^p$ spaces for $1\leq p<\infty$.
\end{remark}

\begin{proposition}\label{Rmk}
When $X$ and $Y$ are $L^p$ spaces with $p\in[1,\infty)$, then R-boundedness and $\ell^2$-boundedness are equivalent, see \cite[Theorem~8.1.3]{ABS2}.
\end{proposition}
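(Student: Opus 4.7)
The plan is to reduce the equivalence to Khintchine's scalar inequality applied pointwise on the underlying measure space, combined with Fubini's theorem and Kahane's inequality. Let $X=L^p(S,\mu)$ and $Y=L^p(S',\mu')$. The classical Khintchine inequality produces constants $A_p,B_p>0$, depending only on $p\in[1,\infty)$, such that for any finite scalar sequence $(a_j)$,
\begin{equation*}
A_p\Bigl(\sum_j|a_j|^2\Bigr)^{1/2} \le \Bigl(\mathbb{E}\Bigl|\sum_j\epsilon_j a_j\Bigr|^p\Bigr)^{1/p} \le B_p\Bigl(\sum_j|a_j|^2\Bigr)^{1/2}.
\end{equation*}

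The key step is to apply this inequality pointwise with $a_j=f_j(s)$ for $f_j\in L^p(S,\mu)$ and fixed $s\in S$, raise to the $p$-th power, and integrate in $s$ against $\mu$. Fubini's theorem permits the interchange of $\mathbb{E}$ and the spatial integral, yielding
\begin{equation*}
A_p\,\Bigl\|\Bigl(\sum_j|f_j|^2\Bigr)^{1/2}\Bigr\|_{L^p(S)}
\le \Bigl(\mathbb{E}\,\Bigl\|\sum_j\epsilon_j f_j\Bigr\|_{L^p(S)}^p\Bigr)^{1/p}
\le B_p\,\Bigl\|\Bigl(\sum_j|f_j|^2\Bigr)^{1/2}\Bigr\|_{L^p(S)}.
\end{equation*}
Applying this identity on the $Y$-side to the image sequence $(T_j x_j)$ and on the $X$-side to the source sequence $(x_j)$ converts the R-bound inequality of Definition~\ref{R-Bdd_Equiv}(a), in its $L^p(\Omega;\cdot)$ formulation from the remark following the definition, directly into the $\ell^2$-bound inequality of part (b), and the argument is reversible. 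Tracking constants gives $A_p^2\,\ell^2(\mathcal{T}) \le \mathcal{R}_p(\mathcal{T}) \le B_p^2\,\ell^2(\mathcal{T})$.

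The only subtlety is that the paper's default R-bound $\mathcal{R}(\mathcal{T})$ is phrased in terms of $L^2(\Omega;\cdot)$, while the Khintchine-Fubini calculation above naturally produces the $L^p(\Omega;\cdot)$ variant $\mathcal{R}_p(\mathcal{T})$. To bridge these two, I would invoke Kahane's inequality: for any Banach space $Z$, all norms $\|\sum_j\epsilon_j z_j\|_{L^q(\Omega;Z)}$ with $1\le q<\infty$ are pairwise equivalent with constants depending only on $q$. Hence $\mathcal{R}_p(\mathcal{T})\sim\mathcal{R}(\mathcal{T})$, closing the equivalence. The conceptual heart of the proof is the Khintchine-Fubini interchange, which is elementary; the main obstacle, and the only place where a genuine Banach-space-valued input is needed, is the passage between the $L^2(\Omega)$ and $L^p(\Omega)$ formulations of R-boundedness, handled cleanly by Kahane's inequality.
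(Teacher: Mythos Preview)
The paper does not actually prove this proposition; it merely states the result and cites \cite[Theorem~8.1.3]{ABS2}. Your argument is the standard proof one finds in that reference: the pointwise Khintchine inequality combined with Fubini gives the square-function norm equivalence on any $L^p$ space, and Kahane's inequality handles the passage between the $L^2(\Omega;\cdot)$ and $L^p(\Omega;\cdot)$ formulations of the R-bound. So your proposal is correct and is precisely what the cited theorem does.

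One small quibble: your constant tracking is slightly off. Applying the Khintchine--Fubini equivalence once on the $Y$-side and once on the $X$-side yields bounds of the form $A_p B_p^{-1}\,\ell^2(\mathcal{T}) \le \mathcal{R}_p(\mathcal{T}) \le B_p A_p^{-1}\,\ell^2(\mathcal{T})$, not $A_p^2$ and $B_p^2$ (and if $X$ and $Y$ are $L^p$ spaces with different exponents, the constants on each side differ accordingly). This has no bearing on the equivalence itself.
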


\begin{theorem}\label{RBddThm}
The Fefferman-Stein maximal inequality in \eqref{FSeqn} implies that the
 set $\{\sqrt{t}\nabla (1+t\Delta)^{-m}:t>0\}$ is R-bounded on $L^p(\M)$ for $1<p<n^*$.
\end{theorem}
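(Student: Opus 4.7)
The plan is to reduce R-boundedness to the pointwise control by the maximal operator, since Proposition~\ref{Rmk} tells us that on $L^p$-spaces (with $1\le p<\infty$) R-boundedness and $\ell^2$-boundedness coincide. So it suffices to establish $\ell^2$-boundedness of the family $\{\sqrt{t}\nabla(1+t\Delta)^{-m}:t>0\}$, and for this the Fefferman-Stein inequality \eqref{FSeqn} is tailor-made.

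First I would fix an arbitrary finite sequence of parameters $(t_j)_{j=1}^J\subset(0,\infty)$ and a finite sequence $(f_j)_{j=1}^J\subset L^p(\M)$, set $T_{t_j}:=\sqrt{t_j}\nabla(1+t_j\Delta)^{-m}$, and make the key pointwise observation
\begin{equation*}
|T_{t_j} f_j(x)|=|\sqrt{t_j}\nabla(1+t_j\Delta)^{-m}f_j(x)|\le \sup_{t>0}|\sqrt{t}\nabla(1+t\Delta)^{-m}f_j(x)|=M_m^{res,\nabla}f_j(x).
\end{equation*}
Squaring and summing, this yields the pointwise inequality
\begin{equation*}
\bigg(\sum_{j=1}^J|T_{t_j}f_j(x)|^2\bigg)^{1/2}\le \bigg(\sum_{j=1}^J|M_m^{res,\nabla}f_j(x)|^2\bigg)^{1/2}.
\end{equation*}

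Next, taking $L^p(\M)$-norms on both sides and invoking the Fefferman-Stein inequality \eqref{FSeqn} from Theorem~\ref{FSThm}, which is valid precisely in the range $1<p<n^*$, I obtain
\begin{equation*}
\bigg\|\bigg(\sum_{j=1}^J|T_{t_j}f_j|^2\bigg)^{1/2}\bigg\|_p\le \bigg\|\bigg(\sum_{j=1}^J|M_m^{res,\nabla}f_j|^2\bigg)^{1/2}\bigg\|_p\le A_{p,\M}\bigg\|\bigg(\sum_{j=1}^J|f_j|^2\bigg)^{1/2}\bigg\|_p.
\end{equation*}
This is exactly the definition of $\ell^2$-boundedness of $\{\sqrt{t}\nabla(1+t\Delta)^{-m}:t>0\}$ with constant $A_{p,\M}$.

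Finally, I apply Proposition~\ref{Rmk} (i.e., \cite[Theorem~8.1.3]{ABS2}) to upgrade $\ell^2$-boundedness to R-boundedness on $L^p(\M)$, which finishes the argument in the range $1<p<n^*$. There is no real obstacle here: the only point demanding any care is to note that the pointwise domination by the maximal function is valid for each individual choice of $t_j$, so that the finite-sum $\ell^2$ inequality required in Definition~\ref{R-Bdd_Equiv}(b) transfers directly from \eqref{FSeqn}. The restriction $p<n^*$ is inherited from Theorem~\ref{FSThm} and cannot be relaxed, consistent with the counterexamples already recorded there.
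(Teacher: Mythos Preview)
Your proposal is correct and follows essentially the same approach as the paper: reduce R-boundedness to $\ell^2$-boundedness via Proposition~\ref{Rmk}, use the pointwise domination $|T_{t_j}f_j|\le M_m^{res,\nabla}f_j$, and then apply the Fefferman-Stein inequality \eqref{FSeqn}. The paper's proof is slightly more terse but the argument is identical.
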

\begin{proof}
Consider the following set 
\begin{equation*}
    \mathcal{T}:=\{T(t)=\sqrt{t}\nabla (1+t\Delta)^{-m}\in \mathcal{L}(L^p(\Omega))\text{ for }t>0: |T(t)f|\leq M_m^{res,\nabla}f \text{ for all }f\in L^p(\Omega)\}.
\end{equation*}
By Proposition \ref{Rmk}, it is enough to show $\ell^2$ boundedness of the above set to prove its R-boundedness.
We have
\begin{align*}
\bigg|\bigg|\bigg(\sum_{j=1}^J|T(t_j)f_j|^2\bigg)^{1/2}\bigg|\bigg|_{L^p(\Omega)}&\leq\bigg|\bigg|\bigg(\sum_{j=1}^J(M_m^{res,\nabla}f_j)^2\bigg)^{1/2}\bigg|\bigg|_{L^p(\Omega)}\\
    &\leq A_{p,\M}\bigg|\bigg|\bigg(\sum_{j=1}^J|f_j|^2\bigg)^{1/2}\bigg|\bigg|_{L^p(\Omega)}.
\end{align*}
The last inequality follows from Theorem~\ref{FSThm}.
\end{proof}

\begin{coro}\label{cor1}
The Fefferman-Stein maximal inequality implies the boundedness of the resolvent based vertical square functions $S^{res,\nabla}_m(f):=\l(\int_0^\infty|\sqrt{t}\nabla(1+t\Delta)^{-m}f|^2\frac{dt}{t}\r)^{1/2}$ for any order $m$ and the semigroup based vertical square function $S^{exp,\nabla}(f):=\l(\int_0^\infty|\sqrt{t}\nabla e^{-t\Delta}f|^2\frac{dt}{t}\r)^{1/2}$ on $L^p(\M)$ for $1~<~p~<~n*$.
\end{coro}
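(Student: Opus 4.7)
The plan is to combine the R-boundedness produced by Theorem~\ref{RBddThm} with a passage from R-boundedness to the continuous square function, inspired by the equivalence invoked in \cite{CO20}. As a preliminary step I would verify the case $p=2$ directly. Using self-adjointness of $\Delta$ on $L^2(\M)$ and the identity $\nabla^*\nabla=\Delta$, integration by parts gives
\[
\int_0^\infty \|\sqrt{t}\nabla (1+t\Delta)^{-m} f\|_2^2 \, \frac{dt}{t}
=\int_0^\infty \langle \Delta (1+t\Delta)^{-2m} f, f\rangle\, dt
=\frac{1}{2m-1}\|f\|_2^2,
\]
where the last equality uses $\Delta(1+t\Delta)^{-2m}=-\tfrac{1}{2m-1}\partial_t(1+t\Delta)^{-(2m-1)}$ and the fundamental theorem of calculus, applied to $f$ in the orthogonal complement of $\ker\Delta$.

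For the $L^p$ bound in the range $1<p<n^*$, I would start from Theorem~\ref{RBddThm}, which yields R-boundedness of $\mathcal{T}=\{T(t)=\sqrt{t}\nabla(1+t\Delta)^{-m}:t>0\}$ and hence, by Proposition~\ref{Rmk}, the $\ell^2$-boundedness
\[
\Big\|\Big(\sum_{j}|T(t_j)f_j|^2\Big)^{1/2}\Big\|_{p}
\le C \Big\|\Big(\sum_{j}|f_j|^2\Big)^{1/2}\Big\|_{p}.
\]
Then, in the spirit of \cite[Proposition 5.3]{CO20} read in the reverse direction, I would discretize $\int_0^\infty |T(t)f|^2\,dt/t$ along dyadic scales $t_j=2^j$, insert a Littlewood--Paley style decomposition $f=\sum_j P_j f$ aligned with those scales so that only the diagonal terms $T(t_j)P_j f$ carry the main mass (in view of the $p=2$ estimate above), and conclude via the $\ell^2$-inequality just displayed combined with real interpolation anchored at the $L^2$ endpoint.

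For the semigroup square function $S^{exp,\nabla}$, I would either transfer from the resolvent case via the Laplace identity
\[
\sqrt{t}\nabla(1+t\Delta)^{-1}f
= \int_0^\infty \frac{e^{-v}}{\sqrt{v}}\,\sqrt{vt}\,\nabla e^{-vt\Delta} f\, dv,
\]
combined with Minkowski's inequality in the $L^2(dv/v)$ norm (which converts a pointwise bound on the resolvent family into one on the semigroup family and vice versa), or re-run the Fefferman--Stein argument of Theorem~\ref{FSThm} directly for the semigroup maximal function, using the heat-kernel bounds of \cite{GS1, GS2} in place of the resolvent parametrix, thereby obtaining R-boundedness of $\{\sqrt{t}\nabla e^{-t\Delta}:t>0\}$ on $L^p(\M)$ for $1<p<n^*$; one then applies the equivalence of \cite{CO20} in its native semigroup form. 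The main obstacle will be the rigorous implementation of the step ``R-boundedness implies continuous square function'' in the non-doubling geometry of $\M$: the standard abstract arguments of Kalton--Weis and Le Merdy presuppose a bounded $H^\infty$ functional calculus for $\Delta$ on $L^p(\M)$, which is not a priori known here, so one would either establish such a calculus in the present setting or replace it by a hands-on Littlewood--Paley construction built from the resolvent decomposition of Section~\ref{Prelim}.
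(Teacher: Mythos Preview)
Your first step---using Theorem~\ref{RBddThm} to pass from the Fefferman--Stein inequality to R-boundedness of $\{\sqrt{t}\nabla(1+t\Delta)^{-m}:t>0\}$ on $L^p(\M)$ for $1<p<n^*$---is exactly what the paper does. The difficulty lies in everything after that, and the gap is that you have misidentified the main obstacle.

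You write that the Kalton--Weis/Le Merdy machinery ``presupposes a bounded $H^\infty$ functional calculus for $\Delta$ on $L^p(\M)$, which is not a priori known here.'' In fact it \emph{is} known: $\Delta$ is a nonnegative self-adjoint operator whose heat semigroup is submarkovian, and such operators have a bounded $H^\infty$ calculus on $L^p$ for every $1<p<\infty$ (the paper itself invokes this explicitly in the proof of Theorem~\ref{ResSF}). With that in hand, the paper's route is short and clean: by \cite[Proposition~2.2]{CO20} the R-boundedness of the resolvent family is equivalent to that of $\{\sqrt{t}\nabla e^{t\Delta}:t>0\}$, and then \cite[Theorem~4.1]{CO20} converts R-boundedness of the semigroup family directly into $L^p$ boundedness of \emph{both} $S^{exp,\nabla}$ and $S^{res,\nabla}_m$. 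No discretisation, no hand-built Littlewood--Paley theory, no interpolation is needed.

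Your proposed workarounds, by contrast, each run into genuine problems. The ``discretise and insert $f=\sum_j P_j f$'' sketch is vague precisely at the point where it needs to be precise: producing projections $P_j$ with the required almost-orthogonality against $T(t_j)$ is essentially building a Littlewood--Paley theory, and the standard way to do that is via the $H^\infty$ calculus you thought was unavailable. The Laplace identity you quote expresses the resolvent as an average of the semigroup, so Minkowski gives semigroup $\Rightarrow$ resolvent, not the converse you need; the reverse direction requires contour integration and is not a simple Minkowski step. Finally, re-running the Fefferman--Stein argument directly for the semigroup maximal function would require pointwise estimates on $\nabla_x e^{t\Delta}(x,y)$ on $\M$, and the paper states explicitly (see the Open Problems) that no such gradient heat-kernel bounds are presently available in this setting---this is exactly why the paper works with resolvents throughout.
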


\begin{proof}
Theorem~\ref{RBddThm} shows that the Fefferman-Stein maximal inequality of the vertical maximal functions implies $R$-boundedness of the set $\{\sqrt{t}\nabla (1+t\Delta)^{-m}:t>0\}$ on $L^p$ for $1<p< n^*$. By \cite[Proposition~2.2]{CO20}, R-boundedness of the set $\{\sqrt{t}\nabla (1+t\Delta)^{-m}:t>0\}$ is equivalent to R-boundedness of the set $\{\sqrt{t}\nabla e^{-t\Delta}:t>0\}$ on $L^p$. Interestingly, from the R-boundedness of the latter set,  \cite[Theorem~4.1]{CO20} gives the boundedness of more general square functions which satisfy some decaying conditions. Both $S^{exp,\nabla}$ and $S^{res,\nabla}_m$ satisfy those conditions on $L^p$ spaces, and hence they are bounded for $1<p<n^*$.
\end{proof}

\begin{coro}\label{cor2}
The Fefferman-Stein maximal inequality \eqref{FSeqn} in Theorem~\ref{FSThm} fails for $p\geq n^*$. The vertical square function operator $S^{exp,\nabla}$ is also unbounded in the same range.
\end{coro}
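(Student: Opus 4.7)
The plan is to derive both statements of Corollary~\ref{cor2} by combining two ingredients: the sharp counterexample from \cite{BaSi}, which establishes unboundedness of the semigroup based vertical square function $S^{\exp,\nabla}$ on $L^p(\M)$ for every $p \ge n^*$, and the chain of implications
\[
\eqref{FSeqn}\;\Longrightarrow\;\text{R-boundedness of }\{\sqrt{t}\nabla(1+t\Delta)^{-m}\}\;\Longrightarrow\;\text{R-boundedness of }\{\sqrt{t}\nabla e^{t\Delta}\}\;\Longrightarrow\;S^{\exp,\nabla}\text{ bounded},
\]
forged in Theorem~\ref{RBddThm} and the proof of Corollary~\ref{cor1} via \cite[Proposition~2.2]{CO20} and \cite[Theorem~4.1]{CO20}.

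The unboundedness of $S^{\exp,\nabla}$ for $p \ge n^*$ (the second claim of the corollary) I would obtain by direct citation of \cite{BaSi}, where the counterexample is constructed from the Grigor'yan--Saloff-Coste heat kernel estimates on manifolds with ends. This is exactly the negative half of the square function statement recalled in the remark following Theorem~\ref{FSThm}.

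For the failure of the Fefferman--Stein inequality, I would argue by contradiction. Assume \eqref{FSeqn} holds for some $p \ge n^*$. Then the argument in the proof of Theorem~\ref{RBddThm}, combined with Proposition~\ref{Rmk} to identify R-boundedness with $\ell^2$-boundedness on $L^p$ scales, yields R-boundedness of the resolvent family $\{\sqrt{t}\nabla(1+t\Delta)^{-m}:t>0\}$ on $L^p(\M)$. The equivalence in \cite[Proposition~2.2]{CO20} transports this to R-boundedness of the semigroup family $\{\sqrt{t}\nabla e^{t\Delta}:t>0\}$, and then \cite[Theorem~4.1]{CO20} forces boundedness of $S^{\exp,\nabla}$ on $L^p(\M)$, contradicting the \cite{BaSi} result invoked above.

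A more elementary argument is available in the strict sub-range $p > n^*$: specialising \eqref{FSeqn} to $f_1 = f$, $f_i = 0$ for $i \ge 2$ immediately recovers the $L^p$ boundedness of $M_m^{res,\nabla}$, which is ruled out by Theorem~\ref{maxThm}(a). The genuine obstacle is the endpoint $p = n^*$, where the scalar maximal operator remains continuous and no such direct reduction exists; it is precisely at this endpoint that the square-function detour through \cite{CO20} is indispensable, and where the full strength of the \cite{BaSi} counterexample is needed to close the argument.
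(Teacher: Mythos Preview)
Your chain of implications is the right one, but you have mis-identified what \cite{BaSi} actually proves. The negative result in \cite[Theorem~1.1]{BaSi} concerns the \emph{resolvent}-based square function $S^{res,\nabla}_m$, not $S^{\exp,\nabla}$. As the paper emphasises elsewhere, no usable pointwise bounds on $\nabla e^{t\Delta}$ are presently available on manifolds with ends, so there is no direct counterexample in \cite{BaSi} for the semigroup square function; indeed, the unboundedness of $S^{\exp,\nabla}$ is precisely the new content of this corollary (and of Proposition~\ref{prop4.2}), not an input you may cite. Your second paragraph therefore invokes a result that does not exist, and since your third paragraph closes the contradiction by appealing to that same non-existent result, the argument as written has a genuine gap.

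The paper's proof runs the logic in the opposite order. It takes the \cite{BaSi} unboundedness of $S^{res,\nabla}_m$ for $p\ge n^*$ as the sole external input. The contrapositive of Corollary~\ref{cor1} then immediately kills \eqref{FSeqn}. For $S^{\exp,\nabla}$, the same \cite{BaSi} input (via the contrapositive of \cite[Theorem~4.1]{CO20}) forces failure of R-boundedness of the resolvent family, which by the equivalence in \cite[Proposition~2.2]{CO20} transfers to the semigroup family, and the converse implication in \cite{CO20} then yields unboundedness of $S^{\exp,\nabla}$. If you simply swap your appeal to a semigroup counterexample for the resolvent counterexample actually in \cite{BaSi}, your third-paragraph contradiction argument becomes essentially the paper's proof. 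Your final observation that the sub-range $p>n^*$ admits the elementary reduction to Theorem~\ref{maxThm}(a), with the endpoint $p=n^*$ genuinely requiring the square-function detour, is correct and worth keeping.
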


\begin{proof}
 We know from \cite[Theorem~1.1]{BaSi} that the resolvent based square function is unbounded on $L^p(\M)$ for $p \ge n^*$. This and Corollary \ref{cor1} imply that the Fefferman-Stein maximal inequality fails in the same range. Also, since the $R$-boundedness of the set $\{\sqrt{t}\nabla (1+t\Delta)^{-m}:t>0\}$ is false for $p \ge n^*$ by the same argument, we obtain negative result for $S^{exp,\nabla}$ from \cite[Proposition~2.2 and Theorem~4.1]{CO20}.
\end{proof}

\begin{proposition}\label{prop4.2}
    On $L^p(\M)$, the vertical square function operator $S^{exp,\nabla}$ is bounded if and only if $1~<~p~<~n^*$.
\end{proposition}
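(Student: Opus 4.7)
The plan is to read off Proposition~\ref{prop4.2} as a direct consequence of Corollaries~\ref{cor1} and \ref{cor2}, which together already encode both halves of the equivalence; my proof would simply organize their content around the two implications, rather than introducing new estimates.

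For the sufficient direction, that $S^{exp,\nabla}$ is bounded on $L^p(\M)$ whenever $1<p<n^*$, I would invoke Corollary~\ref{cor1}. The chain of reasoning there is as follows: the Fefferman-Stein maximal inequality of Theorem~\ref{FSThm} yields, via Theorem~\ref{RBddThm}, $R$-boundedness of the family $\{\sqrt{t}\nabla(1+t\Delta)^{-m}:t>0\}$ on $L^p(\M)$. By \cite[Proposition~2.2]{CO20} this is equivalent to $R$-boundedness of $\{\sqrt{t}\nabla e^{t\Delta}:t>0\}$, and \cite[Theorem~4.1]{CO20} then upgrades this to boundedness of the associated square function $S^{exp,\nabla}$ on the same range of $p$.

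For the necessary direction, that boundedness of $S^{exp,\nabla}$ on $L^p(\M)$ forces $p<n^*$, I would apply Corollary~\ref{cor2}. Its proof starts from the negative result \cite[Theorem~1.1]{BaSi}, which asserts unboundedness of the resolvent-based vertical square function $S^{res,\nabla}_m$ on $L^p(\M)$ for $p\ge n^*$, and again runs through \cite[Proposition~2.2 and Theorem~4.1]{CO20}: unboundedness of $S^{res,\nabla}_m$ rules out $R$-boundedness of the resolvent family, the equivalence of $R$-boundedness for the resolvent and heat families then rules out $R$-boundedness of the heat family, and this in turn rules out boundedness of $S^{exp,\nabla}$ on $L^p(\M)$.

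The only subtlety worth flagging, and in some sense the main obstacle, is the endpoint case $p=n^*$: since the Fefferman-Stein inequality of Theorem~\ref{FSThm} is already known to fail there, one must be careful that the transfer results quoted from \cite{CO20} are being used as genuine equivalences (and not merely one-way implications) at this critical exponent. Once this bookkeeping is verified, Proposition~\ref{prop4.2} follows without any additional computation.
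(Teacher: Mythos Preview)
Your proposal is correct and follows exactly the paper's own proof, which simply states that the result is a direct consequence of Corollaries~\ref{cor1} and~\ref{cor2}. Your write-up merely unpacks the content of those corollaries (and flags the endpoint $p=n^*$ bookkeeping), but the underlying argument is identical.
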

\begin{proof}
  The proof is a direct consequence of Corollaries \ref{cor1}~and~\ref{cor2}. 
\end{proof}
Note that at this point we do not have any good estimates for the kernel of $\nabla e^{-t\Delta}$ on $L^p(\M)$, and hence we do not have any direct proof to show the boundedness of $S^{exp,\nabla}$ on $L^p(\M)$.\\

The following theorem is the resolvent based version of the square function in \cite[Theorem~3.1]{CO20}. The proof of it follows exactly in the same way as that of \cite[Theorem~3.1]{CO20}, however, the R-boundedness of the resolvent sets require the boundedness of two resolvent based square functions of consecutive exponents. We have included the proof here to give an equivalence argument between the semigroup based square functions and the resolvent based square functions.
Note that this theorem works on a general metric measure space $\Omega$. However, on manifolds with ends, the boundedness of square function operators $S^{exp,\nabla}$ and $S^{res,\nabla}_m$ coincide under the assumption that $2m+2\leq \min{n_i}$. We explain this in Remark \ref{RSF-SSF} below.

In the proof of the below theorem we use the notation of the norm with subscript $p$ to denote the norm with subscript $L^p(\M)$ for simplicity.

\begin{theorem}\label{ResSF}
Let $p\in(1,\infty)$ and $m\geq1$. If $S^{res,\nabla}_m$ and $S^{res,\nabla}_{m+1}$ are bounded on $L^p(\M)$, then the set $\{t\nabla (1+t^2\Delta)^{-m}, t>0\}$ is R-bounded on $L^p(\M).$ 
\end{theorem}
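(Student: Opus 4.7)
My plan is to reduce R-boundedness to $\ell^2$-boundedness via Proposition~\ref{Rmk}, and then deduce the $\ell^2$-bound from a pointwise local estimate of $\sqrt{s}\nabla(1+s\Delta)^{-m}f$ (writing $s=t^2$) by the two hypothesized square functions. The key input is a derivative identity. Setting $u(s,x):=\sqrt{s}\nabla(1+s\Delta)^{-m}f(x)$ and $v(s,x):=\sqrt{s}\nabla(1+s\Delta)^{-(m+1)}f(x)$, the product rule combined with the algebraic identity $s\Delta(1+s\Delta)^{-(m+1)}=(1+s\Delta)^{-m}-(1+s\Delta)^{-(m+1)}$ (which conveniently sidesteps the non-commutation of $\nabla$ and $\Delta$ on a manifold) yields
\[
\partial_s u(s,x)=\frac{1-2m}{2s}\,u(s,x)+\frac{m}{s}\,v(s,x).
\]
This is the structural reason the hypothesis must involve \emph{two} consecutive exponents $m$ and $m+1$: the derivative of the order-$m$ family is naturally expressed in terms of the $m$-th and $(m+1)$-th families simultaneously.

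Next I would derive the pointwise local bound
\[
|u(t,x)|^2 \;\lesssim\; \int_t^{2t}\bigl(|u(s,x)|^2+|v(s,x)|^2\bigr)\,\frac{ds}{s}.
\]
Starting from $u(t,x)=u(s,x)-\int_t^s\partial_\sigma u(\sigma,x)\,d\sigma$ for $s\in[t,2t]$, one squares, averages in $s\in[t,2t]$ against the weight $ds/s$, and applies Cauchy--Schwarz to the remainder using the splitting $d\sigma=\sigma^{1/2}\cdot\sigma^{-1/2}\,d\sigma$; the derivative identity is then used to re-express the resulting factor $\sigma|\partial_\sigma u|^2$ as a linear combination of $\sigma^{-1}|u|^2$ and $\sigma^{-1}|v|^2$.

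Given any finite family $(t_j,f_j)_{j=1}^J$, summing the pointwise bound in $j$ and enlarging the integration interval to $(0,\infty)$ gives
\[
\sum_{j=1}^J \bigl|t_j\nabla(1+t_j^2\Delta)^{-m}f_j(x)\bigr|^2 \;\lesssim\; \sum_{j=1}^J\Bigl[\bigl(S_m^{res,\nabla}f_j(x)\bigr)^2+\bigl(S_{m+1}^{res,\nabla}f_j(x)\bigr)^2\Bigr].
\]
Taking the $L^p$-norm of the $\ell^2$-square root reduces the problem to a Hilbert-valued $\ell^2$-extension: since $S_m^{res,\nabla}f(x)$ is the pointwise $L^2(ds/s;T\M)$-norm of a linear operator from $L^p(\M)$ into the Hilbert-space-valued Lebesgue space $L^p(\M;L^2(ds/s;T\M))$ whose operator norm is $\|S_m^{res,\nabla}\|_{p\to p}$, its $\ell^2$-vector-valued extension is automatic with the same norm (the Hilbert-space analogue of Lemma~\ref{FSLem}). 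Applying this to both $S_m^{res,\nabla}$ and $S_{m+1}^{res,\nabla}$ controls the right-hand side by $\|(\sum_j|f_j|^2)^{1/2}\|_p$, establishing $\ell^2$-boundedness and therefore R-boundedness.

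The main obstacle will be the pointwise estimate in the second step, where the Cauchy--Schwarz exponents must be calibrated so that the scale-invariant measure $ds/s$ appears on both sides without logarithmic divergence at the endpoints of $[t,2t]$. Everything else is essentially routine: the derivative identity is elementary algebra, and the Hilbert-valued $\ell^2$-extension is a standard soft-analysis fact that can be verified by mimicking Lemma~\ref{FSLem} with the scalar space replaced by a Hilbert-valued Lebesgue space. The overall strategy mirrors \cite[Theorem 3.1]{CO20} for the semigroup, adapted to the resolvent setting in which the $s$-derivative of the $m$-th resolvent naturally produces both the $m$-th and $(m+1)$-th resolvents.
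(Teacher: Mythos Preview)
Your argument is correct but follows a genuinely different route from the paper's. The paper adapts \cite[Theorem~3.1]{CO20} directly: it writes the Rademacher quantity $I=\E\bigl|\sum_j\epsilon_j t_j\nabla(1+t_j^2\Delta)^{-m}f_j\bigr|^2$ as $-\int_0^\infty \frac{d}{dt}\E\bigl|\nabla(1+t^2\Delta)^{-m}\sum_j\epsilon_j t_j(1+t_j^2\Delta)^{-m}f_j\bigr|^2\,dt$, computes the $t$-derivative, applies Cauchy--Schwarz, and bounds the result by $S^{res,\nabla}_m$ and $S^{res,\nabla}_{m+1}$ applied to the random functions $\sum_j\epsilon_j(1+t_j^2\Delta)^{-m}f_j$ and $\sum_j\epsilon_j t_j^2\Delta(1+t_j^2\Delta)^{-m}f_j$. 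After Kahane's inequality and the square function hypothesis, one is left with the R-boundedness of $\{(1+t^2\Delta)^{-m}\}$ and $\{t^2\Delta(1+t^2\Delta)^{-m}\}$, which the paper imports from the bounded $H^\infty$-calculus of $\Delta$ via \cite[Theorem~10.3.4]{ABS2}.

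Your approach replaces this global integration-by-parts with a pointwise, local-in-scale inequality $|u(t,x)|^2\lesssim\int_t^{2t}(|u|^2+|v|^2)\,ds/s$ derived from the same derivative identity, and then closes by the automatic $\ell^2$-extension of the linear operator $f\mapsto \bigl(s\mapsto\sqrt{s}\nabla(1+s\Delta)^{-m}f\bigr)$ into $L^p(\M;L^2(ds/s;T\M))$. The payoff is that you never need the auxiliary R-boundedness of the scalar resolvent families, and hence no appeal to the $H^\infty$-calculus; the argument is entirely self-contained once the two square functions are assumed bounded. The paper's route, on the other hand, stays closer to the template in \cite{CO20} and is perhaps more portable to abstract settings where pointwise manipulations of $\nabla(1+s\Delta)^{-m}f(x)$ are less convenient. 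One small point worth making explicit in your write-up: the pointwise differentiability of $s\mapsto u(s,x)$ used in the fundamental-theorem-of-calculus step should first be justified on a dense class (e.g.\ $f\in C_c^\infty(\M)$) and the final $\ell^2$-bound then extended by density.
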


\begin{proof}
The proof follows on the same lines of \cite[Theorem 3.1]{CO20}. We shall show if $S^{res,\nabla}_m$ and $S^{res,\nabla}_{m+1}$ are bounded on $L^p(\M)$ for some $m$ then $\{t\nabla (1+t^2\Delta)^{-m}, t>0\} $ is R-bounded on $L^p(\M)$. %A general result which, in particular, proves the converse of this theorem is given in \cite[Theorem 4.1]{CO20}.
Let $t_j\in(0,\infty)$ and $f_j\in L^p(\M)$ for $j=1,...,J$. We begin by estimating the quantity $I:=\E|\sum_j\e_jt_j\nabla(1+t_j^2\Delta)^{-m}f_j|^2$. Using the fact that the Rademacher variables are independent we obtain
\begin{align*}
    I&=-\int_0^\infty\frac{d}{dt}\E|\nabla(1+t^2\Delta)^{-m}\sum_j\e_jt_j(1+t_j^2\Delta)^{-m}f_j|^2dt\\
    &=2m\int_0^\infty\E\Bigg[\Big(\nabla(1+t^2\Delta)^{-2m-1}(2t\Delta)\sum_j\e_jt_j(1+t_j^2\Delta)^{-m}f_j\Big)\cdot\Big(\nabla\sum_j\e_jt_j(1+t_j^2\Delta)^{-m}f_j\Big)\Bigg]dt\\
    &=2m\int_0^\infty\E\Bigg[\Big(\nabla(1+t^2\Delta)^{-m}\sum_j\e_jt_j(1+t_j^2\Delta)^{-m}f_j\Big)\cdot\Big(\nabla(1+t^2\Delta)^{-m-1}(2t\Delta)\e_jt_j(1+t_j^2\Delta)^{-m}f_j\Big)\Bigg]dt\\
    &=4m\int_0^\infty\E\Bigg[\Big(\nabla t(1+t^2\Delta)^{-m}\sum_j\e_j(1+t_j^2\Delta)^{-m}f_j\Big)\cdot\Big(\nabla t(1+t^2\Delta)^{-(m+1)}\e_jt_j^2\Delta(1+t_j^2\Delta)^{-m}f_j\Big)\Bigg]\frac{dt}{t}\\
    &=4m\int_0^\infty\E\Bigg[\Big(\nabla t(1+t^2\Delta)^{-m}\sum_j\e_j(1+t_j^2\Delta)^{-m}f_j\Big)\cdot\Big(\nabla t(1+t^2\Delta)^{-(m+1)}\sum_j\e_jt_j^2\Delta(1+t_j^2\Delta)^{-m}f_j\Big)\Bigg]dt.
\end{align*}
By the Cauchy-Schwartz inequality,
\begin{align*}
    I&\leq 4m\int_0^\infty\Bigg(\E|t\nabla(1+t^2\Delta)^{-m}\sum_j\e_j(1+t_j^2\Delta)^{-m}f_j|^2\Bigg)^{1/2}\Bigg(\E|t\nabla(1+t^2\Delta)^{-(m+1)}\sum_j\e_jt_j^2\Delta(1+t_j^2\Delta)^{-m}f_j|^2\Bigg)^{1/2}\frac{dt}{t}\\
    &\leq 2m\Bigg(\int_0^\infty\E|t\nabla(1+t^2\Delta)^{-m}\sum_j\e_j(1+t_j^2\Delta)^{-m}f_j|^2\frac{dt}{t}+\int_0^\infty\E|t\nabla(1+t^2\Delta)^{-(m+1)}\sum_j\e_jt_j^2\Delta(1+t_j^2\Delta)^{-m}f_j|^2\frac{dt}{t}\Bigg).
\end{align*}
Since both $S^{res,\nabla}_m$ and $S^{res,\nabla}_{m+1}$ are bounded, we get
\begin{equation*}
    I\lesssim_{m}\E\Bigg[\Big(S^{res,\nabla}_m\Big(\sum_j\e_j(1+t_j^2\Delta)^{-m}f_j\Big)\Big)^2\Bigg]+\E\Bigg[\Big(S^{res,\nabla}_{m+1}\Big(\sum_j\e_jt_j^2\Delta(1+t_j^2\Delta))^{-m}f_j\Big)\Big)^2\Bigg].
\end{equation*}
Looking at $S^{res,\nabla}_m$ and $S^{res,\nabla}_{m+1}$ as the norm in $L^2((0,\infty),\frac{dt}{t})$ we obtain
\begin{equation*}
    \E\Bigg[\Big(S^{res,\nabla}_m\Big(\sum_j\e_j(1+t_j^2\Delta)^{-m}f_j\Big)\Big)^2\Bigg]=\E\Bigg|\Bigg|\sum_j\e_jt\nabla(1+t^2\Delta)^{-m}(1+t_j^2\Delta)^{-m}f_j\Bigg|\Bigg|^2_{L^2((0,\infty),\frac{dt}{t})}
\end{equation*}
and
\begin{equation*}
    \E\Bigg[\Big(S^{res,\nabla}_{m+1}\Big(\sum_j\e_j(1+t_j^2\Delta)^{-m}f_j\Big)\Big)^2\Bigg]=\E\Bigg|\Bigg|\sum_j\e_jt\nabla(1+t^2\Delta)^{-(m+1)}\sum_j\e_jt_j^2\Delta(1+t_j^2\Delta)^{-m}f_j\Bigg|\Bigg|^2_{L^2((0,\infty),\frac{dt}{t})}.
\end{equation*}
Now, by the Kahane inequality
\begin{equation*}
    c_{p,m}\sqrt{I}\leq\Bigg|\E\Bigg[\Big(S^{res,\nabla}_m\Big(\sum_j\e_j(1+t_j^2\Delta)^{-m}f_j\Big)\Big)^p\Bigg]\Bigg|^{1/p}+\Bigg|\E\Bigg[\Big(S^{res,\nabla}_{m+1}\Big(\sum_j\e_jt_j^2\Delta(1+t_j^2\Delta)^{-m}f_j\Big)\Big)^p\Bigg]\Bigg|^{1/p}
\end{equation*}
for some constant $c_{p,m}>0$. Now, using the assumption that $S^{res,\nabla}_m$ and $S^{res,\nabla}_{m+1}$ are bounded on $L^p(\M)$, we obtain
\begin{align*}
\Big|\Big|\sqrt{I}\Big|\Big|_p&\lesssim_{p,m}\bigg|\E\bigg|\bigg|\sum_j\e_j(1+t_j^2\Delta)^{-m}f_j\bigg|\bigg|_p^p\bigg|^{1/p}+\bigg|\E\bigg|\bigg|\sum_j\e_jt_j^2\Delta(1+t_j^2\Delta)^{-m}f_j\bigg|\bigg|_p^p\bigg|^{1/p}\\
    &\lesssim_{p,m}\E\bigg|\bigg|\sum_j\e_j(1+t_j^2\Delta)^{-m}f_j\bigg|\bigg|_p+\E\bigg|\bigg|\sum_j\e_jt_j^2\Delta(1+t_j^2\Delta)^{-m}f_j\bigg|\bigg|_p,
\end{align*}
where we again used the Kahane inequality. We can also see from the Kahane inequality that  $||\sqrt{I}||_p$ is equivalent to $\E||\sum_j\e_jt_j\nabla(1+t_j^2\Delta)^{-m}f_j||_p$. Since $\Delta$ has a bounded holomorphic functional calculus on $L^p(\M)$, it follows from \cite[Theorem 10.3.4]{ABS2} that $\{(1+t^2\Delta)^{-m},t>0\}$ and $\{t^2\Delta(1+t^2\Delta)^{-m},t>0\}$ are R-bounded on $L^p(\M)$. This, together with previous estimates, gives
\begin{equation*}
    \E\bigg|\bigg|\sum_j\e_jt_j\nabla(1+t_j^2\Delta)^{-m}f_j\bigg|\bigg|_p\lesssim\E\bigg|\bigg|\sum_j\e_jf_j\bigg|\bigg|_p
\end{equation*}
with a constant independent of $t_j$ and $f_j$. This shows that the set $\{t\nabla (1+t^2\Delta)^{-m}, t>0\}$ is R-bounded on $L^p(\M)$.
\end{proof}

\begin{remark}\label{RSF-SSF}
    The converse of the above theorem holds true by \cite[Theorem 4.1]{CO20}. Nevertheless, it is true for  $1<p< n^*$. This is because by \cite[Theorem 1.1]{BaSi} $S^{res,\nabla}_m$ is not bounded for $p\geq  n^*$ under the assumption that $2m< n^*$. Now, R-boundedness of the sets $\{\sqrt{t}\nabla (1+t\Delta)^{-m}:t>0\}$ and $\{\sqrt{t}\nabla e^{-t\Delta}:t>0\}$ is equivalent by \cite[Proposition 2.2]{CO20}, and Cometx-Ouhabaz have shown the equivalence between the R-boundedness of $\{\sqrt{t}\nabla e^{-t\Delta}:t>0\}$ with the boundedness of the square function $S^{exp,\nabla}$ in \cite[Theorem 3.1 and Theorem 4.1]{CO20}. On combining this together with Theorem~\ref{ResSF} we obtain equivalence of the boundedness of both $S^{exp,\nabla}$ and $S^{res,\nabla}_m$ on $L^p(\M)$ for $1<p<  n^*$ under the assumption that $2m+2\leq  n^*$.
\end{remark}
\subsection{Open problems}
Here we mention some open problems that one could think of solving on manifolds with ends or on a more general metric measure space.
\begin{enumerate}
    \item We have shown in this article the boundedness of the maximal operator $M^{res,\nabla}_m$ on $L^p(\M)$. The problem to check the boundedness of $M^{exp,\nabla}$ on $L^p(\M)$ is still open. The boundedness of these maximal operators is also open on other metric measure spaces.
    We believe that this problem is solvable if one can find Grigor'yan and Saloff-Coste type estimates \cite{GS1} on the gradient of the heat kernel, $\nabla e^{-t\Delta}$, on manifolds with ends. 
    %\item We believe that the problem in $(1)$ is solvable if one can find Grigor'yan and Saloff-Coste type estimates \cite{GS1} on the gradient of the heat kernel, $\nabla e^{-t\Delta}$, on manifolds with ends. 
    \item It is natural to expect that weak type $(1,1)$ can be included in the statement 
    of Proposition~\ref{prop4.2}, but we do not know how to verify it. 
\end{enumerate}
\bigskip

\noindent
{\bf Acknowledgements}

\medskip

\noindent Both authors were partly supported by the Australian Research Council (ARC) Discovery Grant  DP200101065.
The authors would like to thank the anonymous referee
for helpful remarks and corrections.

%\medskip

%\noindent
%{\bf Data availability} 
%Data sharing not applicable to this article as no datasets were generated or analysed in this study.

%\medskip

%\noindent
%{\bf Declarations}

%\medskip

%\noindent
%{\bf Conflict of interest} The authors declare that they have no conflict of interest.

\bibliographystyle{abbrv}
\bibliography{Bibliography}

\end{document}